\newcommand\LL{{\mathcal L}}
\newcommand\R{\mathbb{R}}
\newcommand\ep{\varepsilon}
\newcommand\eps{\varepsilon}
\newcommand\dt{\,dt}
\newcommand\ds{\,ds}
\newcommand\dx{\,dx}
\newtheorem{theorem}{Theorem}[section]
\newtheorem{proposition}[theorem]{Proposition}
\newtheorem{lemma}[theorem]{Lemma}
\newtheorem{corollary}[theorem]{Corollary}
\theoremstyle{definition}
\newtheorem{definition}[theorem]{Definition}
\newtheorem{assumption}[theorem]{Assumption}
\theoremstyle{remark}
\newtheorem{remark}[theorem]{Remark}
\title{De Giorgi's approach to hyperbolic Cauchy problems:\\the case of nonhomogeneous equations}
\author{Lorenzo Tentarelli$^\dagger$ and Paolo Tilli$^\ddagger$
\\ \ \\
{\small $^\dagger$Dipartimento di Matematica e Applicazioni ``R. Caccioppoli'' } \\
{\small Universit\`a degli Studi di Napoli ``Federico II'' } \\
{\small Via Cintia, Monte S. Angelo, I-80126 Napoli, Italy} \\
{\small \texttt{lorenzo.tentarelli@unina.it}}\\ \ \\
{\small  $^\ddagger$Dipartimento di Scienze Matematiche ``G.L. Lagrange'' } \\
{\small Politecnico di Torino } \\
{\small Corso Duca degli Abruzzi, 24, 10129 Torino, Italy} \\
{\small \texttt{paolo.tilli@polito.it}}
}
\begin{document}

\maketitle

\begin{abstract}
In this paper we discuss an extension
of some results obtained by E. Serra and P. Tilli, in \cite{ST1,ST2}, concerning an original conjecture by E. De Giorgi (\cite{degiorgi1, degiorgi2}) on a \emph{purely minimization} approach to the Cauchy problem for the defocusing nonlinear \emph{wave equation}. Precisely, we show how to extend the techniques developed by Serra and Tilli for homogeneous hyperbolic nonlinear PDEs to the \emph{nonhomogeneous} case, thus proving that the idea of De Giorgi yields in fact an effective approach to investigate general hyperbolic equations.
\end{abstract}

\noindent{\small AMS Subject Classification: 35L70, 35L71, 35L75, 35L76, 35L90, 49J45.}
\smallskip

\noindent{\small Keywords: nonlinear hyperbolic equations, mimimization, nonhomogeneous PDEs, De Giorgi conjecture.}


\section{Introduction}

In this paper we present an extension, to the case of nonhomogeneous equations,
of some recent results obtained in \cite{ST1,ST2} on
a \emph{minimization approach} to hyperbolic Cauchy problems. This approach
was originally
suggested by E. De~Giorgi  through a conjecture (\cite{degiorgi1,degiorgi2}),
essentially proved in \cite{ST1} and then extended to an abstract setting in
\cite{ST2} (see also \cite{serra,stefanelli,tentarelli} and references therein).

More precisely, we introduce a suitable variant of this method in order to investigate
hyperbolic PDEs having the formal structure of
\begin{equation}
 \label{eq-wave}
 w''(t,x)=-\nabla\mathcal{W}\big(w(t,\cdot\,)\big)(x)+f(t,x),\qquad(t,x)\in\R^+\times\R^n,
\end{equation}
with two prescribed initial conditions
\begin{equation}
 \label{eq-ic}
 w(0,x)=w_0(x),\quad\quad w'(0,x)=w_1(x),\qquad x\in\R^n.
\end{equation}
Here, as in \cite{ST2}, $\nabla\mathcal{W}$ is the G\^ateaux derivative of a functional $\mathcal{W}:\mathrm{W}\to[0,\infty)$ ($\mathrm{W}$ is some Banach space of functions in $\R^n$, typically a Sobolev space), the main novely being that
 we allow
for a function
$f(t,x)$ in \eqref{eq-wave}, that acts as a forcing term (a source) in the resulting PDE.

The idea behind De Giorgi's approach is to obtain solutions of hyperbolic Cauchy problems as limits
(when $\ep\downarrow0$)
of the \emph{minimizers} $w_\eps$ of a sequence of suitable functionals $F_\eps$
of the Calculus of Variations, defined as integrals in space-time of a suitable Lagrangian with
an exponential weight.
 De~Giorgi's conjecture, in its original formulation \cite{degiorgi1}, concerns the
 defocusing NLW equation
\begin{equation}
 \label{eq-NLW}
 w''=\Delta w-|w|^{p-2}\,w\qquad(p\geq2),
\end{equation}
which falls within the general scheme \eqref{eq-wave} if we let
\[
 \mathcal{W}(v)=\int_{\R^n}\left(\frac{1}{2}|\nabla v|^2+\frac{1}{p}|v|^p\right)\dx\qquad\text{and}\qquad f\equiv0.
\]
If $w_\eps$ denotes the minimizer of the \emph{convex} functional in space-time
\begin{equation}
\label{defFh}
F_{\ep}^h(w):=
\int_0^{\infty}e^{-t/\ep}\,\left(\frac{\ep^2}{2}\int_{\R^n}|w''(t,x)|^2\dx+\mathcal{W}\big(w(t,\cdot\,)\big)\right)\dt
\end{equation}
subject to the \emph{boundary} conditions \eqref{eq-ic},
De~Giorgi conjectured that $w_\eps\to w$, where $w$ solves \eqref{eq-NLW} and satisfies \eqref{eq-ic}, now meant as \emph{initial} conditions of the Cauchy problem
(for more details see \cite{degiorgi1,nirenberg2,ST1}).
This conjecture was essentially proved
 in \cite{ST1} (see also \cite{stefanelli}),
and then generalized in \cite{ST2} with an abstract version of the result,
which shows that the NLW equation \eqref{eq-NLW} can be replaced with
the abstract equation
\eqref{eq-wave} for quite general functionals $\mathcal{W}$,
but still in the \emph{homogeneous} case where $f\equiv0$.

Of course, when a nontrivial source $f(t,x)$ is present
in \eqref{eq-wave}, the functional $F_{\ep}^h$ defined in \eqref{defFh}
(being independent of $f$) is no longer appropriate: instead of $F_{\ep}^h$,
a natural choice is to minimize, subject to the boundary conditions \eqref{eq-ic},
the functional
\begin{equation}
 \label{eq-funz}
 F_{\ep}(w):=F_{\ep}^h(w)-F_{\ep}^s(w)
\end{equation}
where $F_{\ep}^s$ is the \emph{linear} functional
\begin{equation}
 \label{eq-funzsor}
 F_{\ep}^s(w):=\int_0^{\infty}\!\int_{\R^n}e^{-t/\ep}\,f_{\ep}(t,x)w(t,x)\dx\dt
\end{equation}
and $f_{\ep}(t,x)$ is a suitable \emph{approximation} of $f(t,x)$.
Intuitively, this can be justified  by the following heuristic argument:
if  $w_{\ep}$ is a minimizer of $F_{\ep}$ subject to \eqref{eq-ic},
by elementary computations one can check that
 the Euler-Lagrange equations for $F_\eps$
 reduce to
\begin{equation}
\label{eul}
 \ep^2\,w_{\ep}''''(t,x)-2\ep w_{\ep}'''(t,x)+w_{\ep}''(t,x)=-\nabla\mathcal{W}\big(w_{\ep}(t,\cdot\,)\big)(x)+f_{\ep}(t,x).
\end{equation}
 Now the connection with \eqref{eq-wave} is clear: when $\ep\downarrow0$,
 assuming that $f_{\ep}\to f$ and $w_{\ep}\to w$, one formally obtains \eqref{eq-wave}
(coupled with \eqref{eq-ic})
in the limit (of course choosing $f_\eps=f$ would seem most natural, but unfortunately
this is not possible, as we shall explain later).

In this paper we show that this procedure can be carried out successfully, under the
sole assumption that $f\in L^2_{\text{loc}}([0,\infty);L^2)$, and under very general
assumptions on the functional $\mathcal{W}$ (namely Assumption~\ref{ass-W} and
\eqref{eq-Wass}, as in \cite{ST2}). Our results are summarized in Theorem~\ref{result},
which is a natural development of the research program initiated in \cite{ST1,ST2}
(in fact, letting $f\equiv 0$ in Theorem~\ref{result}, one obtains all the results
of \cite{ST2} as a particular case). In order to illustrate the wide variety of
nonhomogeneous equations covered by Theorem~\ref{result} we refer to Section~\ref{sec-examples}
which, being independent of the technical parts of the paper, can serve as a supplement to this introduction.

We wish to stress that this is not just a technical extension
of the results in \cite{ST2}. Indeed, in \cite{ST1,ST2}
the main ingredient
 to obtain estimates on the minimizers $w_\eps$
is a control
(uniform in $\eps$) of the quantity
\begin{equation}
\label{defea}
 \mathcal{E}_{\ep}(t):=\frac{1}{2}\int_{\R^n}|w_{\ep}'(t,x)|^2\dx
+\eps^{-2}\int_0^{\infty}s \,e^{-s/\ep} \,\mathcal{W}\bigl(w_{\ep}(t+s)\bigr)\ds,
\end{equation}
the so called \emph{approximate energy}, to be compared (in view of $w_\eps\to w$) to
\[
 \mathcal{E}(t):=\frac{1}{2}\int_{\R^n}|w'(t,x)|^2\dx
+\mathcal{W}\bigl(w(t)\bigr),
\]
the natural energy for a solution of \eqref{eq-wave}, which is formally preserved
when $f\equiv 0$. Now, contrary to $\mathcal{E}(t)$ which
depends only on $w'(t)$ and ${\mathcal W}\bigl(w(t)\bigr)$,
we see that the potential term in \eqref{defea} (the
integral involving $\mathcal W$)
depends on the values of $\mathcal{W}\bigl(w_\ep(\tau)\bigr)$ for all $\tau\geq t$: following \cite{tao},
we say that this term is ``acausal''.

This acausality is deep-seated: since \eqref{eul} is
of the fourth order in $t$, prescribing \emph{two} initial conditions as in \eqref{eq-ic}
is not enough to uniquely determine the evolution of $w_\eps(t)$. On the other hand, $w_\eps$
is obtained as a \emph{minimizer} of $F_\eps$ subject to \eqref{eq-ic}, and
 the minimization procedure certainly \emph{selects}, among the infinitely many solutions of
\eqref{eul}\&\eqref{eq-ic}, \emph{one} with special features (such as the finiteness of
$F_\eps(w_\eps)$, which is trivial for a minimizer, but does not follow from \eqref{eul}\&\eqref{eq-ic}). Thus, the fact that the global-in-time behaviour of $w_\eps(s)$
is relevant for the approximate energy $\mathcal{E}_\ep(t)$ is not surprising. Note, however,
that the function $\eps^{-2}s e^{-s}$ in \eqref{defea} is a \emph{probability measure}
on $s>0$, which concentrates at $s=0$ when $\ep\downarrow0$: therefore, the second integral in
\eqref{defea} is just an (acausal) average of $\mathcal{W}\big(w_\eps(\tau)\big)$ for $\tau\geq t$,
which concentrates around $\tau=t$ for small $\eps$ (so that, heuristically,
acausality becomes negligible when $\ep\downarrow0$, as long as smoothness is assumed).

Now, in the nonhomogeneous case where $f\not\equiv 0$, the approximate
energy $\mathcal{E}_\ep(t)$ (as defined in \eqref{defea}) is again the natural object to estimate.
But we see from \eqref{eq-funz} and \eqref{eq-funzsor} that the presence of $f_\eps$
may strongly influence the behaviour of $w_\eps(t)$, possibly in a \emph{global} (hence
also acausal) way: and this is in contrast with the limit problem \eqref{eq-wave}\&\eqref{eq-ic},
where the solution $w(T)$ depends only on $f(t)$ restricted to  $t\in [0,T]$, in
a strictly causal way.

This calls for some new ideas, in addition to those introduced in \cite{ST1,ST2}, in order
to obtain strong enough estimates on $w_\eps$, pass to the limit in \eqref{eul},
and obtain  sharp energy estimates as in \eqref{eq-enineq}. Therefore, in our proofs,
we shall mainly focus on these new aspects, referring to \cite{ST2} for those
lemmas or computations which do not require significant changes.

In \cite{ST2}, where $f\equiv 0$, it is proved that $\mathcal{E}_\ep'(t)\leq 0$,
so that $\mathcal{E}_\ep(t)\leq \mathcal{E}_\ep(0)\leq\mathcal{E}(0)+o(1)$, and this is the key
to all the subsequent estimates for $w_\eps(t)$, uniform in $\eps$ and $t$. Here, instead,
the presence of the forcing term $f$ prevents any apriori monotonicity, and every
bound for $\mathcal{E}_\ep(t)$ will depend on $f$ itself. In fact,
$\mathcal{E}_\ep'(t)$ depends on $f$ (more
precisely on $f_\eps$) in a nonlocal, acausal way (see Section~\ref{sec-est_en}), and this
requires new strategies and a careful analysis based (among the other things) on the tools introduced in Section~\ref{sec-average}.
Indeed, we can obtain estimates only over bounded time intervals and up to some residual terms, which however can be proved to vanish in the limit when $\ep\downarrow0$.

In the light of these considerations, it appears that letting $f_\eps=f$ in \eqref{eq-funzsor}
(though formally correct) is not appropriate, and some
nontrivial approximation
$f_\eps\to f$ is therefore mandatory. Moreover, we wish to work with $f\in L^2_{\text{loc}}([0,\infty);L^2)$
(which is the natural assumption if one seeks solutions of \eqref{eq-wave} with \emph{finite energy}
 -- see e.g.~\cite{cherrier,lions2}),
while the integral in \eqref{eq-funz}, in order to be defined, requires some
restriction on the growth of $\Vert f_\ep(t)\Vert_{L^2}$.
In any case, we stress that the choice of the sequence $f_{\ep}$ is crucial in the
detection and the control of the residual terms in our estimates (Lemma \ref{lem-fapp}, Corollary \ref{cor-fond} and Remark \ref{rem-connections}).

\medskip

The full strength of Theorem~\ref{result} is obtained under the structural
assumption \eqref{eq-Wass}, which forces the evolution equation
\eqref{eq-wave} to be semilinear (albeit of
arbitrary order in space, including waves equations with the fractional Laplacian -- see
Section~\ref{sec-examples}).

It should be pointed out, however, that assumption
\eqref{eq-Wass} is required only  in item (e) of Theorem~\ref{result}
(the passage
to the limit in \eqref{eul} to obtain \eqref{eq-wave}):
all the other claims of the theorem (items (a)--(d), including estimates and convergence
to a function that satisfies the energy inequality)
are valid in the much wider setting
of Assumption~\ref{ass-W},
which is typically satisfied by any reasonable functional of the Calculus of Variations
(not necessarily convex, and possibly nonlocal). As shown in Section~\ref{sec-examples},
this broad framework includes wave equations with the $p$-Laplacian
 such as \eqref{eq-quasilinear} and nonlocal
evolutions like the Kirchhoff equation \eqref{kirch}, for which the existence of
global weak solutions is an open problem: the validity of items (a)--(d) of Theorem~\ref{result}
for these equations suggests a possible new strategy in this direction, since no
counterexample is known to the claim of item (e), for which \eqref{eq-Wass} is just
a sufficient condition.

Of course, in several concrete examples where \eqref{eq-Wass} is satisfied
(e.g. the NLW equation \eqref{dNLW}) the existence of global weak solutions
provided by Theorem~\ref{result}
is not new
(for an overview of other techniques we refer the reader to \cite{cherrier,lions4,segal,shastru1,shastru2,shastru3,strauss,struwe,tao} and references therein).
However, we stress that the
variety of different examples of equations that
can be treated by this unifying approach is remarkable,
and we believe that this variational technique  would deserve further investigations.

Finally, we recall that suitable variants of this variational approach
to evolutions problems
have recently been developed to study  other kind of equations: we refer the reader to
 \cite{akagi,bogelein,melchionna} for applications to parabolic equations, and to \cite{liero}
(and references therein) for the application to ODE systems.

\bigskip
\bigskip

\noindent\textbf{Remark on Notation.} If $g=g(t,x)$,
we write $g(t)$ or equivalently $g(t,\cdot\,)$ to denote the function of $x$
that is obtained fixing $t$. We also write $g'$, $g''$ etc. to denote partial derivatives with respect to $t$, while differential operators like $\nabla$, $\Delta$ etc.
are referred to the space variables only.
Concerning function spaces, we agree that $L^p=L^p(\R^n)$, $H^m=H^m(\R^n)$ etc.,
the domain $\R^n$ being understood.
Finally, $\langle\cdot\,,\cdot\,\rangle$ denotes  a duality pairing (usually clear from the
context), while $(\cdot\,,\cdot\,)_H$ denotes the inner product in a Hilbert space $H$.

\bigskip
\bigskip
\noindent\textbf{Acknowledgements}

\medskip
\noindent L.T. acknowledges the support of MIUR through the FIR grant 2013 ``Condensed Matter in Mathematical Physics (Cond-Math)'' (code RBFR13WAET).


\section{Functional setting and main results}

The abstract equation \eqref{eq-wave} and the functional \eqref{defFh} are defined
in terms of the abstract functional $\mathcal{W}$.
In order to develop our approach, the properties  that $\mathcal W$ must safisfy
are the same as in \cite{ST2}, and can be summarized as follows.

\begin{assumption}
 \label{ass-W}
 The functional $\mathcal{W}:L^2\to[0,\infty]$ is lower semicontinuous in the weak topology of $L^2$. Moreover, we assume that its domain, i.e. the set of functions
 \begin{equation}
  \label{eq-domW}
  \mathrm{W}:=\{v\in L^2:\mathcal{W}(v)<\infty\},
 \end{equation}
 is a Banach space such that
 \begin{equation}
  \label{eq-dom_emb}
  C_0^{\infty}\hookrightarrow\mathrm{W}\hookrightarrow L^2\qquad\mbox{(dense embeddings).}
 \end{equation}
 Finally, $\mathcal{W}$ is G\^ateaux differentiable on $\mathrm{W}$ and its derivative $\nabla\mathcal{W}:\mathrm{W}\to\mathrm{W}'$ satisfies
 \begin{equation}
  \label{eq-diffW}
  \|\nabla\mathcal{W}(v)\|_{\mathrm{W}'}\leq C\big(1+\mathcal{W}(v)^{\theta}\big),\qquad\forall v\in\mathrm{W},
 \end{equation}
 for suitable constants $C\geq0$ and $\theta\in(0,1)$.\qed
\end{assumption}

\begin{remark}
This assumption (in particular, inequality \eqref{eq-diffW}) is typical of Dirichlet-type functionals like $\mathcal{W}(v)=\Vert \nabla^k v\Vert_{L^p}^p$ with $p>1$
 (in this case  $W$ is a suitable Sobolev space).  We refer to Section \ref{sec-examples}
for some examples. Here we just point out
that Assumption \ref{ass-W} is \emph{additively stable}, i.e. if two functionals satisfy Assumption \ref{ass-W}, then so does their sum (for further remarks on this assumption, see \cite{ST2}).
\end{remark}

\begin{theorem}
 \label{result}
 Let $\mathcal{W}$ be a functional satisfying Assumption \ref{ass-W} and $w_0,\,w_1\in\mathrm{W}$. Let also $f\in L_{\text{loc}}^2([0,\infty),L^2)$. Then, there exists a sequence $(f_\ep)$, converging to $f$ in $L_{\text{loc}}^2([0,\infty),L^2)$, such that:
 \begin{itemize}
  \item[(a)] \textbf{Minimizers}. For every $\ep\in(0,1)$, the functional $F_{\ep}$ defined by \eqref{eq-funz} has a minimizers $w_{\ep}$, among all functions in $H^2_{\text{loc}}([0,\infty);L^2)$ that satisfy \eqref{eq-ic}.
  \item[(b)] \textbf{Estimates}. For every $T>0,\tau\geq0$, there exist constants $C_T,\,C_{\tau,T}$
  independent of $\ep$ such that
  \begin{equation}
  \label{eq-apruno}
  \sup_{t\in [0,T]}\int_{\R^n}\big(\,|w_{\ep}'(t,x)|^2+|w_{\ep}(s,x)|^2\,\big)\dx\leq C_{T},
 \end{equation}
 \begin{equation}
  \label{eq-aprdue}
  \int_{\tau}^{\tau+T}\mathcal{W}\big(w_{\ep}(t)\big)\dt\leq C_{\tau,T},\qquad\forall T>\ep,
 \end{equation}
 \begin{equation}
  \label{eq-aprtre}
  \int_0^T\|w_{\ep}''(s)\|_{\mathrm{W}'}^2\ds\leq C_{T}.
 \end{equation}
 \item[(c)] \textbf{Convergence}. Every sequence $w_{\ep_i}$ (with $\ep_i\downarrow0$) admits
 a subsequence which is convergent in the weak topology of $H^1_{\text{loc}}([0,+\infty);L^2)$
 to a function $w$ that satisfies \eqref{eq-ic} (where the latter condition
 is meant as an equality in $\mathrm{W}'$). In addition,
 \begin{equation}
  \label{eq-reg}
  w'\in L_{\text{loc}}^{\infty}([0,\infty);L^2)\qquad\text{and}\qquad w''\in L_{\text{loc}}^2([0,\infty);\mathrm{W}').
 \end{equation}
 \item[(d)] \textbf{Energy inequality}. Letting
 \[
  \mathcal{E}(t):=\frac{1}{2}\int_{\R^n}|w'(t,x)|^2\dx+\mathcal{W}\big(w(t)\big),
 \]
 there holds
 \begin{equation}
  \label{eq-enineq}
  \mathcal{E}(t)\leq\left(\sqrt{\mathcal{E}(0)}+\sqrt{\frac{t}{2}\int_0^t\int_{\R^n}|f(s,x)|^2\dx\ds\,}\right)^{2},\qquad\text{for a.e.} \quad t\geq0.
 \end{equation}
 \item[(e)] \textbf{Weak solution of \eqref{eq-wave}}. Assuming furthermore that, for some real numbers $m>0,\,\lambda_k\geq0,\,p_k>1$, $\mathcal{W}$ takes the form of
 \begin{equation}
  \label{eq-Wass}
  \mathcal{W}(v)=\frac{1}{2}\|v\|_{\dot{H}^m}^2+\sum_{0\leq k<m}\frac{\lambda_k}{p_k}\int_{\R^n}|\nabla^kv(x)|^{p_k}\dx,
 \end{equation}
 then the limit function $w$ satisfies
 \begin{equation}
  \label{eq-waveweak}
  \int_0^{\infty}\int_{\R^n}w'(t,x)\varphi'(t,x)\dx\dt = \int_0^{\infty}\left\langle\nabla\mathcal{W}\big(w(t)\big),\varphi(t)\right\rangle\dt-\int_0^{\infty}\int_{\R^n}f(t,x)\varphi(t,x)\dx\dt
 \end{equation}
 for every $\varphi\in C_0^{\infty}(\R^+\times\R^n)$, namely, solves \eqref{eq-wave} in the sense of distributions.
 \end{itemize}
\end{theorem}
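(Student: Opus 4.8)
The plan is to follow the variational scheme of \cite{ST1,ST2}, carrying the linear perturbation $F_\ep^s$ through every step and controlling the extra terms it generates. First I would fix the approximating sequence $(f_\ep)$. Since the integral in \eqref{eq-funz} is well defined only when $e^{-t/\ep}\|f_\ep(t)\|_{L^2}$ is integrable against functions of at most exponential growth, a natural choice is to let $f_\ep$ be a time–truncation of a mollification of $f$, tuned so that $f_\ep\to f$ in $L^2_{\mathrm{loc}}([0,\infty);L^2)$ while keeping under control the residual contributions appearing in the energy estimate (this is the role of Lemma~\ref{lem-fapp} and Corollary~\ref{cor-fond}, and the averaging operators of Section~\ref{sec-average} are the natural device to quantify them). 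With $f_\ep$ fixed, item~(a) follows from the direct method: $F_\ep^h$ is convex and sequentially weakly lower semicontinuous on $H^2_{\mathrm{loc}}([0,\infty);L^2)$ (weak lower semicontinuity of $\mathcal W$ together with Fatou's lemma for the weight $e^{-t/\ep}$), the affine constraint \eqref{eq-ic} is weakly closed, and the linear functional $F_\ep^s$ is weakly continuous and, by Cauchy--Schwarz combined with the exponentially weighted Poincar\'e-type inequality of \cite{ST1} that bounds $w$ in terms of $w''$ and the data $w_0,w_1$, can be absorbed so that $F_\ep$ is bounded below with bounded minimizing sequences; testing $F_\ep$ against the competitor $t\mapsto w_0+tw_1$ (whose energy is finite by \eqref{eq-diffW}) shows the constraint set is nontrivial, hence a minimizer $w_\ep$ exists, and it solves \eqref{eul} with $f_\ep$ on the right-hand side.

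The heart of the matter is item~(b), and this is where I expect the main obstacle. As in \cite{ST2} I would study the approximate energy $\mathcal E_\ep(t)$ of \eqref{defea}: differentiating it and using \eqref{eul} yields $\mathcal E_\ep'\le0$ when $f\equiv0$, but here an additional term survives, namely an \emph{acausal} average of $\langle f_\ep(t+s),w_\ep'(t+s)\rangle$ against the probability density $\ep^{-2}s\,e^{-s/\ep}$. This term cannot be handled by a forward Gronwall argument (the forcing influences $w_\ep$ globally in time), so the plan is to estimate it by Cauchy--Schwarz and the averaging lemmas of Section~\ref{sec-average}, recasting it on each bounded interval $[0,T]$ as $\lesssim\bigl(\text{a local average of }\|f_\ep\|_{L^2}\bigr)\sqrt{\mathcal E_\ep(t)}$ plus a residual $R_\ep(T)\to0$; integrating the resulting inequality for $\tfrac{d}{dt}\sqrt{\mathcal E_\ep}$ and applying Cauchy--Schwarz in $s$ gives $\sqrt{\mathcal E_\ep(t)}\le\sqrt{\mathcal E_\ep(0)}+\bigl(\tfrac t2\int_0^t\|f_\ep(s)\|_{L^2}^2\,ds\bigr)^{1/2}+R_\ep(T)$ for $t\in[0,T]$. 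The bound $\mathcal E_\ep(0)\le\mathcal E(0)+o(1)$ is obtained by comparing $F_\ep(w_\ep)$ with $F_\ep(\widetilde w)$ for a competitor with $\widetilde w(0)=w_0$, $\widetilde w'(0)=w_1$ and $\mathcal W(\widetilde w(t))$ close to $\mathcal W(w_0)$ near $t=0$, the term $F_\ep^s(w_\ep)$ being again a controlled residual. From here, \eqref{eq-apruno} is read off the first term of $\mathcal E_\ep$ (with $w_\ep$ recovered by integrating $w_\ep'$), \eqref{eq-aprdue} follows from the acausal potential term of $\mathcal E_\ep$ via the Fubini-type identity of \cite{ST2} that turns $\int_\tau^{\tau+T}\mathcal W(w_\ep)$ into values of $\mathcal E_\ep$, and \eqref{eq-aprtre} follows from \eqref{eul} using \eqref{eq-diffW} and the bounds just obtained.

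Item~(c) is then standard compactness: the bounds in~(b) give $(w_\ep)$ bounded in $H^1_{\mathrm{loc}}([0,\infty);L^2)$ with $(w_\ep'')$ bounded in $L^2_{\mathrm{loc}}([0,\infty);\mathrm W')$, so a subsequence converges weakly in $H^1_{\mathrm{loc}}([0,\infty);L^2)$ to a function $w$ satisfying \eqref{eq-reg}, and the identities $w_\ep(t)=w_0+\int_0^t w_\ep'$ and $w_\ep'(t)=w_1+\int_0^t w_\ep''$ pass to the limit in $\mathrm W'$, yielding \eqref{eq-ic}. For item~(d) I would pass to the limit in the inequality from~(b): on the left, weak lower semicontinuity handles $\tfrac12\|w_\ep'(t)\|_{L^2}^2$ and, since $\ep^{-2}s\,e^{-s/\ep}\,ds$ concentrates at $s=0$, weak lower semicontinuity of $\mathcal W$ handles the acausal average, giving $\liminf_\ep\mathcal E_\ep(t)\ge\mathcal E(t)$ for a.e.\ $t$ (after integrating in $t$ and applying the Lebesgue differentiation theorem); on the right, $\mathcal E_\ep(0)\to\mathcal E(0)$ as above, $\int_0^t\|f_\ep(s)\|_{L^2}^2\,ds\to\int_0^t\|f(s)\|_{L^2}^2\,ds$ since $f_\ep\to f$, and $R_\ep(T)\to0$; this gives \eqref{eq-enineq}.

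Finally, for item~(e) I would use the weak form of \eqref{eul} tested against $\varphi\in C_0^\infty(\R^+\times\R^n)$ and integrate by parts in $t$ so that all time derivatives fall on $\varphi$. The terms carrying the factors $\ep^2$ and $2\ep$ vanish in the limit (coefficients tend to $0$ and $(w_\ep)$ is bounded in $L^2_{\mathrm{loc}}([0,\infty);L^2)$), the term from $w_\ep''$ converges to $-\int_0^\infty\int_{\R^n}w'\varphi'$, and the term from $f_\ep$ converges to $\int_0^\infty\int_{\R^n}f\varphi$. The only genuinely delicate point — and the reason \eqref{eq-Wass} is invoked here — is the convergence of $\int_0^\infty\langle\nabla\mathcal W(w_\ep(t)),\varphi(t)\rangle\,dt$: the quadratic $\dot H^m$ part is \emph{linear} in $w_\ep$ and passes to the limit by the weak convergence $w_\ep\rightharpoonup w$ in $L^2_{\mathrm{loc}}([0,\infty);\dot H^m)$ coming from \eqref{eq-aprdue}, while for each $p_k$-term one needs $|\nabla^k w_\ep|^{p_k-2}\nabla^k w_\ep\rightharpoonup|\nabla^k w|^{p_k-2}\nabla^k w$, which I would obtain from the a.e.\ convergence of $\nabla^k w_\ep$ on the (compact) support of $\varphi$ — via Aubin--Lions--Simon compactness, using that $(w_\ep)$ is bounded in $L^2_{\mathrm{loc}}([0,\infty);\mathrm W)$ and $(w_\ep')$ in $L^2_{\mathrm{loc}}([0,\infty);L^2)$, with \eqref{eq-diffW} ($\theta<1$) keeping the nonlinear terms subordinate — together with the uniform $L^{p_k'}_{\mathrm{loc}}$ bound on $|\nabla^k w_\ep|^{p_k-2}\nabla^k w_\ep$, which identifies the weak limit. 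Letting $f\equiv0$ throughout recovers the results of \cite{ST2}.
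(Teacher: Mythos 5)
Your proposal tracks the paper's route at every stage: truncated approximation of $f$, direct method for the minimizer, approximate energy plus a Gr\"onwall-type argument, weak compactness, and (for item (e)) an Aubin--Lions/Lions compactness argument for the nonlinear term, exactly as in \cite{ST2} adapted to the present setting. The one genuine gap is in the treatment of the approximate energy, and it concerns inner variations. You assert that $\mathcal E_\ep(0)\le\mathcal E(0)+o(1)$ follows ``by comparing $F_\ep(w_\ep)$ with $F_\ep(\widetilde w)$ for a competitor.'' That competitor comparison yields, after rescaling, only the level bound \eqref{eq-levest}, i.e. a bound on $\mathcal AL_\ep(0)=H_\ep(u_\ep)=\int_0^\infty e^{-t}\,\bigl(D_\ep(t)+\mathcal W_\ep(t)\bigr)\,dt$. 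But the potential part of $\mathcal E_\ep(0)$ is the different, $t$--weighted quantity $\mathcal A^2\mathcal W_\ep(0)=\int_0^\infty t\,e^{-t}\,\mathcal W_\ep(t)\,dt$, and there is no a priori comparison $\mathcal A^2\mathcal W_\ep(0)\lesssim\mathcal AL_\ep(0)$: the extra factor $t$ makes $\mathcal A^2$ larger at infinity. What is actually needed is the inner-variation identity \eqref{eq-relzero} of Proposition~\ref{prop-jder}, obtained by differentiating $J_\ep$ under the time reparametrization $t\mapsto t-\delta g(t)$ (with $g(t)=t$) applied to the minimizer; this identity relates $\mathcal A^2L_\ep(0)$, $\mathcal AL_\ep(0)$, $\mathcal AD_\ep(0)$, the source term $\mathcal A^2\Phi_\ep(0)$ and a residual $R(u_\ep)$, and only through it does one close the estimate for $E_\ep(0)$.

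The same omission affects your derivation of the differential inequality for the approximate energy. You propose to obtain it ``by differentiating $\mathcal E_\ep$ and using \eqref{eul}.'' However, as the paper emphasizes in the introduction, \eqref{eul} alone does not determine the minimizer: it is a fourth-order equation with only two boundary data, so it has many growing solutions, and the approximate energy of such a solution need not even be finite. The computation of $E_\ep'(t)$ in the paper requires \eqref{eq-relt} (the inner-variation identity evaluated at $g(t)=(t-T)^+$) precisely to eliminate the term $K_\ep'(t)$ and obtain \eqref{eq-appder}; this step encodes the global-in-time information carried by the minimizer (finiteness of $H_\ep(u_\ep)$) that is not contained in the pointwise equation \eqref{eul}. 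So you need to add, as a distinct variational ingredient, the inner-variation relations of Proposition~\ref{prop-jder} and its Corollary before the rest of your plan (the averaging lemmas, the Gr\"onwall step, the passage to the limit) goes through. Two minor remarks: the paper's approximation $f_\ep$ is a plain time truncation, not a mollification (mollification is harmless but unnecessary); and for item (e) the paper cites Lions' compactness theorem where you invoke Aubin--Lions--Simon, which is an equivalent alternative.
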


\begin{remark}
 Note that the functional defined in \eqref{eq-Wass} satisfies Assumption \ref{ass-W} with $\mathrm{W}=\{v\in H^m:\nabla^kv\in L^{p_k},\,0\leq k<m\}$ (for details see \cite{ST2}). Recall also that $\|v\|_{\dot{H}^m}$ is the $L^2$ norm of $|\xi|^m\,\widehat{v}(\xi)$, where $\widehat{v}$ is the Fourier transform of $v$. The typical case is $m\in\mathbb{N}$ when $\|v\|_{\dot{H}^m}$ reduces to $\|\nabla^mv\|_{L^2}$.
\end{remark}

\begin{remark}
 Throughout, solutions of \eqref{eq-wave}-\eqref{eq-ic} obtained via Theorem \ref{result}, are called \emph{variational solutions}.
\end{remark}

\begin{remark}
 \label{rem-extensions}
 We mention that several variants of Theorem \ref{result} can be proved. For instance, one can introduce \emph{nonconstant coefficients} in \eqref{eq-Wass}, possibly exploiting some G$\mathring{\mathrm{a}}$rding type inequalities to keep $\mathcal{W}$ coercive. Also, one can consider more general lower-order terms (with proper convexity and growth assumptions) like \emph{powers of single partial derivatives}. In any case, the main point is that $\mathcal{W}$ be \emph{quadratic} (and coercive) in the highest order terms, namely that equation \eqref{eq-wave} be \emph{semilinear}. Furthermore, as pointed out in \cite{ST1,ST2}, the minimization approach can be adapted, without significative changes, to the case of a generic (sufficiently smooth) open set $\Omega\subset\R^n$ with Dirichlet or Neumann boundary conditions.
\end{remark}

It is worth stressing that $\mathcal{E}$ is formally preserved by variational solutions of \eqref{eq-wave}-\eqref{eq-ic}, in the sense that
\begin{equation}
 \label{eq-meccons}
 \mathcal{E}(t)=\mathcal{E}(0)+\int_0^t(f(s),w'(s))_{L^2}\ds,\qquad\forall t\geq0.
\end{equation}
However, we are not able to prove enough regularity for such solutions in order to solve the long-standing problem of the energy conservation for weak solutions of \eqref{eq-wave}. Anyway, a formal Gr\"onwall argument based on \eqref{eq-meccons} reveals that the energy estimate \eqref{eq-enineq} is ``close'' to being optimal.


\section{A preliminary tool: the average operator}
\label{sec-average}

The study of integrals with an exponential weight plays a central role in our investigation. Therefore, it is worth recalling the definition of \emph{average} operator, introduced in \cite{ST2}.

\begin{definition}
 The \emph{average operator} is the linear operator that associates any measurable function $h:[0,\infty]\to[0,\infty]$ with the function $\mathcal{A}h$, given by
 \[
  \mathcal{A}h\,(t):=\int_t^{\infty}e^{-(s-t)}\,h(s)\ds,\qquad t\geq0.
 \]
\end{definition}

\noindent We also recall that, as $\mathcal{A}h\,(0)<\infty$, $\mathcal{A}h$ is absolutely continuous on intervals $[0,T]$, for all $T>0$, and that
\[
 (\mathcal{A}h)'=\mathcal{A}h-h.
\]
In addition, one can iterate the action of $\mathcal{A}$, thus obtaining
\begin{equation}
 \label{eq-avter}
 \mathcal{A}^2h\,(t):=\mathcal{A}(\mathcal{A}h)\,(t)=\int_t^{\infty}e^{-(s-t)}\,(s-t)\,h(s)\ds
\end{equation}
(for details see \cite{ST2}). Finally, we stress that $\mathcal{A}h$ is well defined (and all the previous properties are valid) even when $h$ is a changing sign function, provided that it satisfies $\mathcal{A}|h|\,(0)<\infty$.

\medskip
Now, we show some relevant results that will be widely used in the sequel.

\begin{lemma}
 \label{lem-aveq}
 Let $h:[0,\infty)\to[0,\infty)$ be a function such that $\mathcal{A}h\,(0)<\infty$. Then, for every $\tau\geq0$ and every $\delta>0$
 \begin{equation}
  \label{eq-avequno}
  \int_{\tau}^{\tau+\delta}\mathcal{A}h\,(s)\ds=\int_{\tau}^{\tau+\delta}h(s)\ds+\mathcal{A}h\,(\tau+\delta)-\mathcal{A}h\,(\tau).
 \end{equation}
 If, in addition, $\mathcal{A}^2h\,(0)<\infty$, then
 \begin{equation}
  \label{eq-aveq}
  \int_{\tau}^{\tau+\delta}\mathcal{A}^2h\,(s)\ds = \int_{\tau}^{\tau+\delta}h(s)\ds+\mathcal{A}h\,(\tau+\delta)-\mathcal{A}h\,(\tau)+\mathcal{A}^2h\,(\tau+\delta)-\mathcal{A}^2h\,(\tau).
 \end{equation}
\end{lemma}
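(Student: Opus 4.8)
The plan is to derive both identities from the elementary relation $(\mathcal{A}h)'=\mathcal{A}h-h$ recalled above, together with the fundamental theorem of calculus for absolutely continuous functions; the second identity will then follow from the first, applied once more with $\mathcal{A}h$ in the role of $h$. Before starting, I would record a preliminary integrability fact. Since $h\geq0$ and $\mathcal{A}h\,(0)=\int_0^\infty e^{-s}h(s)\ds<\infty$, on any bounded interval $[\tau,\tau+\delta]$ we have $e^{-s}\geq e^{-(\tau+\delta)}$, hence
\[
\int_\tau^{\tau+\delta}h(s)\ds\leq e^{\tau+\delta}\int_\tau^{\tau+\delta}e^{-s}h(s)\ds\leq e^{\tau+\delta}\,\mathcal{A}h\,(0)<\infty .
\]
Thus $h\in L^1_{\mathrm{loc}}([0,\infty))$ and every integral in the statement is finite; the same estimate with the nonnegative function $\mathcal{A}h$ in place of $h$ shows $\mathcal{A}h\in L^1_{\mathrm{loc}}$ as soon as $\mathcal{A}(\mathcal{A}h)(0)=\mathcal{A}^2h\,(0)<\infty$.

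To prove \eqref{eq-avequno}, I would use that, as recalled just before the lemma, $\mathcal{A}h$ is absolutely continuous on $[0,\tau+\delta]$ with $(\mathcal{A}h)'=\mathcal{A}h-h$ a.e.\ Hence, by the fundamental theorem of calculus,
\[
\mathcal{A}h\,(\tau+\delta)-\mathcal{A}h\,(\tau)=\int_\tau^{\tau+\delta}(\mathcal{A}h)'(s)\ds=\int_\tau^{\tau+\delta}\mathcal{A}h\,(s)\ds-\int_\tau^{\tau+\delta}h(s)\ds ,
\]
and a rearrangement gives exactly \eqref{eq-avequno}. (Alternatively, one can reach the same identity by writing $\int_\tau^{\tau+\delta}\mathcal{A}h\,(s)\ds$ as a double integral in $(s,r)$, applying Tonelli's theorem since the integrand is nonnegative, and splitting the $r$-integration at $r=\tau+\delta$; this route is a bit longer but uses only positivity.)

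For \eqref{eq-aveq}, the point is that $\mathcal{A}^2h=\mathcal{A}(\mathcal{A}h)$ and, by hypothesis, $\mathcal{A}(\mathcal{A}h)(0)=\mathcal{A}^2h\,(0)<\infty$; therefore \eqref{eq-avequno} applies verbatim with the nonnegative function $\mathcal{A}h$ in place of $h$, yielding
\[
\int_\tau^{\tau+\delta}\mathcal{A}^2h\,(s)\ds=\int_\tau^{\tau+\delta}\mathcal{A}h\,(s)\ds+\mathcal{A}^2h\,(\tau+\delta)-\mathcal{A}^2h\,(\tau).
\]
Substituting \eqref{eq-avequno} for the term $\int_\tau^{\tau+\delta}\mathcal{A}h\,(s)\ds$ on the right-hand side produces \eqref{eq-aveq} at once.

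I do not expect a genuine obstacle here: the only point deserving a little care is the justification, under the sole hypothesis $\mathcal{A}h\,(0)<\infty$, of the local integrability of $h$ and of the fundamental theorem of calculus for $\mathcal{A}h$ — which is why I would isolate the integrability remark at the outset. Everything else is bookkeeping, and the whole argument reduces to the two displayed one-line computations.
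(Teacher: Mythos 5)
Your proof is correct, and it takes a genuinely different route from the paper's. The paper proves \eqref{eq-avequno} by applying Fubini's theorem to the double integral $\int_\tau^{\tau+\delta}\int_s^\infty e^{-(y-s)}h(y)\,dy\,ds$, swapping the order of integration, and splitting the resulting $y$-integral at $y=\tau+\delta$; for \eqref{eq-aveq} it simply says to ``iterate the same argument,'' i.e.\ apply Fubini again to the kernel $e^{-(s-t)}(s-t)$. You instead lean on the identity $(\mathcal{A}h)'=\mathcal{A}h-h$ together with the absolute continuity of $\mathcal{A}h$ (both already recorded in the paper just before the lemma), so that \eqref{eq-avequno} is a one-line consequence of the fundamental theorem of calculus, and then derive \eqref{eq-aveq} by applying \eqref{eq-avequno} once with $\mathcal{A}h$ in place of $h$ (legitimate because $\mathcal{A}^2h\,(0)<\infty$) and substituting back. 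Your preliminary remark that $\mathcal{A}h\,(0)<\infty$ forces $h\in L^1_{\mathrm{loc}}$ is the right thing to isolate, as it is what makes the identity $(\mathcal{A}h)'=\mathcal{A}h-h$ usable on bounded intervals. The trade-off: the paper's Fubini computation is self-contained and only uses positivity, while your approach is shorter and makes the iteration for \eqref{eq-aveq} entirely mechanical, at the cost of invoking the differentiability facts about $\mathcal{A}$ stated earlier. Both are sound, and you even flag the Tonelli route as the alternative, which is essentially what the paper does.
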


\begin{proof}
 By the Fubini theorem
 \begin{align*}
  \int_{\tau}^{\tau+\delta}\mathcal{A}\,h(s)\ds & =\int_0^{\infty}e^{-y}\,h(y)\left(\int_{\tau}^{\tau+\delta}\chi_{[0,y]}(s)\,e^s\ds\right)\,dy=\\[.2cm]
                                                & =\int_{\tau}^{\tau+\delta}e^{-y}\,h(y)\left(\int_{\tau}^{y}e^s\ds\right)\,dy+\int_{\tau+\delta}^{\infty}e^{-y}\,h(y)\left(\int_{\tau}^{\tau+\delta}e^s\ds\right)\,dy
 \end{align*}
 and then easy computations yield \eqref{eq-avequno}. Iterating the same argument one immediately finds \eqref{eq-aveq}.
\end{proof}

\begin{lemma}
 \label{lem-poinuno}
 For every $\alpha>1$ there exists a constant $C_{\alpha}>0$ such that for all $h\in H_{\text{loc}}^1([0,\infty);L^2)$
 \begin{equation}
  \label{eq-poinuno}
  \mathcal{A}\|h(\cdot)\|_{L^2}^2\,(t)\leq\alpha\|h(t)\|_{L^2}^2+C_{\alpha}\mathcal{A}\|h'(\cdot)\|_{L^2}^2\,(t),\qquad\forall t\geq0.
 \end{equation}
\end{lemma}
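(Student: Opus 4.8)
The plan is to reduce \eqref{eq-poinuno} to a pointwise estimate on the function $\phi(t):=\|h(t)\|_{L^2}^2$, exploiting the identity $(\mathcal{A}h)'=\mathcal{A}h-h$ applied to $\phi$ rather than to $h$ itself. First I would observe that, for $h\in H^1_{\text{loc}}([0,\infty);L^2)$, the scalar function $\phi$ is locally absolutely continuous with $\phi'(t)=2(h(t),h'(t))_{L^2}$ for a.e.\ $t$, so that in particular $\mathcal{A}\phi(0)<\infty$ need not hold a priori — but the inequality we want to prove is trivial (both sides $=+\infty$) unless $\mathcal{A}\|h'(\cdot)\|_{L^2}^2(t)<\infty$, and under that hypothesis a Cauchy--Schwarz/Young bound of the form $|\phi'(s)|\le \eta\,\phi(s)+\eta^{-1}\|h'(s)\|_{L^2}^2$ shows $\mathcal{A}\phi$ is finite and absolutely continuous near $t$ as well. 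So from now on I assume the right-hand side is finite.

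Next, for fixed $t$ set $g(s):=e^{-(s-t)}$ on $[t,\infty)$ and integrate by parts: $\mathcal{A}\phi(t)=\int_t^\infty e^{-(s-t)}\phi(s)\ds = \phi(t)+\int_t^\infty e^{-(s-t)}\phi'(s)\ds$, where the boundary term at $\infty$ vanishes because $e^{-(s-t)}\phi(s)\to 0$ (again a consequence of $\mathcal{A}\phi(t)<\infty$). Hence
\[
 \mathcal{A}\phi(t)=\phi(t)+\int_t^\infty e^{-(s-t)}\,2\,(h(s),h'(s))_{L^2}\ds.
\]
Now estimate the integrand by Cauchy--Schwarz and then Young's inequality with a parameter $\beta>0$ to be chosen: $2(h(s),h'(s))_{L^2}\le 2\|h(s)\|_{L^2}\|h'(s)\|_{L^2}\le \beta\|h(s)\|_{L^2}^2+\beta^{-1}\|h'(s)\|_{L^2}^2$. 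This gives
\[
 \mathcal{A}\phi(t)\le \phi(t)+\beta\,\mathcal{A}\phi(t)+\beta^{-1}\,\mathcal{A}\|h'(\cdot)\|_{L^2}^2(t).
\]
Choosing $\beta=1-1/\alpha\in(0,1)$ (legitimate since $\alpha>1$) and absorbing the $\beta\,\mathcal{A}\phi(t)$ term to the left yields $(1-\beta)\mathcal{A}\phi(t)\le \phi(t)+\beta^{-1}\mathcal{A}\|h'(\cdot)\|_{L^2}^2(t)$, i.e.\ $\mathcal{A}\phi(t)\le \alpha\,\phi(t)+\alpha\beta^{-1}\,\mathcal{A}\|h'(\cdot)\|_{L^2}^2(t)$, which is \eqref{eq-poinuno} with $C_\alpha=\alpha/(1-1/\alpha)=\alpha^2/(\alpha-1)$.

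The only genuinely delicate point is the justification of the integration by parts and the vanishing of the boundary term at infinity without assuming regularity or decay of $h$ beyond the stated hypothesis; I expect this to be the main obstacle, but it is handled by the observation above that everything is trivial unless the right-hand side is finite, in which case $\mathcal{A}\phi(t)$ is finite (by the Young bound $\phi\le \mathcal{A}\phi$-free estimate applied on a truncated interval plus a limiting argument, exactly as in the absolute-continuity remark following the definition of $\mathcal{A}$) and the usual dominated-convergence argument applies on $[t,R]$ letting $R\to\infty$. Alternatively, one may first prove the inequality for smooth compactly supported $h$, where no subtlety arises, and then pass to the general case by the density of $C_0^\infty$ in $H^1_{\text{loc}}([0,\infty);L^2)$ together with Fatou's lemma on the left and continuity of $\mathcal{A}$ against the $L^1_{\text{loc}}$ weight $e^{-(s-t)}\chi_{[t,\infty)}$ on the right.
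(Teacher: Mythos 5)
Your proof is correct and arrives at the same constant $C_\alpha=\alpha^2/(\alpha-1)$, but the paper's version is organized differently in a way that sidesteps the point you flag as ``the only genuinely delicate'' one. The paper first fixes $a>t$ and integrates by parts on the \emph{finite} interval $[t,a]$, pointwise in $x$; the boundary term at $s=a$ is $-e^{-a}|h(a,x)|^2\le 0$ and is simply dropped, so there is no vanishing-at-infinity issue at all. It then applies Cauchy--Schwarz \emph{in the time variable} over $[t,a]$, followed by the elementary splitting $2\sqrt{bc}\le\nu b+\nu^{-1}c$, and absorbs the self-referential term; everything is finite on $[t,a]$, so the absorption is unconditionally legitimate. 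Only at the end does it integrate in $x$ and let $a\to\infty$, by monotone convergence. Your route instead works directly with $\phi(t)=\|h(t)\|_{L^2}^2$ on the half-line, uses a \emph{pointwise}-in-$s$ Young estimate on $\phi'$, and then needs two side arguments: (i) the inequality is vacuous when $\mathcal{A}\|h'\|^2(t)=\infty$, and otherwise $\mathcal{A}\phi(t)<\infty$ (a Gr\"onwall-type observation); and (ii) the boundary term $e^{-(R-t)}\phi(R)$ at infinity vanishes. On (ii) a small caution: from $\mathcal{A}\phi(t)<\infty$ alone you only get $\liminf_{R\to\infty}e^{-(R-t)}\phi(R)=0$, not a full limit, so you should either pass to a subsequence $R_k$ along which the boundary term vanishes (the remaining integrals converge because $\mathcal{A}|\phi'|(t)<\infty$ by your Young bound), or do as the paper does and simply drop the boundary term at a finite endpoint since it has the favorable sign. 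Your final ``smooth then density'' fallback also works but is more machinery than needed. In short: same core mechanism (integration by parts, Cauchy--Schwarz/Young, absorption), but truncating to $[t,a]$ before integrating by parts makes the finiteness and boundary issues disappear for free.
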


\begin{proof}
 Let $t\geq0$ and $a>t$. By assumption, for a.e. $x\in\R^n$ the function $h(\,\cdot\,,x)$ belongs to $H^1((t,a))$. Then, integrating by parts and using Cauchy-Schwarz,
 \begin{align*}
  \int_t^a e^{-s}\,|h(s,x)|^2\ds & \leq e^{-t}\,|h(t,x)|^2+2\int_t^a e^{-s}\,h(s,x)h'(s,x)\ds\\[.2cm]
                                 & \leq e^{-t}\,|h(t,x)|^2+2\left(\int_t^a e^{-s}\,|h(s,x)|^2\ds\right)^{1/2}\left(\int_t^a e^{-s}\,|h'(s,x)|^2\ds\right)^{1/2}.
  \end{align*}
 Since $2\sqrt{bc}\leq\nu b+\tfrac{1}{\nu}c$ for every $\nu>0$, we can split the last product and, for any choice of $\nu<1$, we find that
 \[
  \int_t^a e^{-s}\,|h(s,x)|^2\ds\leq\frac{1}{1-\nu}e^{-t}\,|h(t,x)|^2+\frac{1}{\nu(1-\nu)}\int_t^a e^{-s}\,|h'(s,x)|^2\ds.
 \]
 Now, integrating over $\R^n$ and letting $a\to\infty$, we obtain \eqref{eq-poinuno}, where $\alpha=\frac{1}{1-\nu}$ and $C_{\alpha}=\frac{1}{\nu(1-\nu)}=\frac{\alpha^2}{\alpha-1}$.
\end{proof}

\begin{lemma}
 \label{lem-poindue}
 For every $\beta>1$ there exists a constant $C_{\beta}>0$ such that for all $h\in H_{\text{loc}}^1([0,\infty);L^2)$
 \begin{equation}
  \label{eq-poindue}
  \mathcal{A}^2\|h(\cdot)\|_{L^2}^2\,(t)\leq\beta\|h(t)\|_{L^2}^2+C_{\beta}\left(\mathcal{A}\|h'(\cdot)\|_{L^2}^2\,(t)+\mathcal{A}^2\|h'(\cdot)\|_{L^2}^2\,(t)\right),\qquad\forall t\geq0.
 \end{equation}
\end{lemma}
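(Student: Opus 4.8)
The plan is to derive Lemma~\ref{lem-poindue} from Lemma~\ref{lem-poinuno} by exploiting the composition structure $\mathcal{A}^2=\mathcal{A}\circ\mathcal{A}$ together with the monotonicity of $\mathcal{A}$ (if $0\le\phi\le\psi$ pointwise then $\mathcal{A}\phi\le\mathcal{A}\psi$ pointwise, allowing the value $+\infty$) and its linearity. If the right-hand side of \eqref{eq-poindue} is $+\infty$ there is nothing to prove, so one may assume all the $\mathcal{A}$- and $\mathcal{A}^2$-averages of $\|h'(\cdot)\|_{L^2}^2$ that appear are finite; from $\mathcal{A}g\,(s)\le e^{s-t}\mathcal{A}g\,(t)$ for $s\ge t$ and $g\ge0$ one checks that all averages occurring below are then finite and that the Fubini identity \eqref{eq-avter} (i.e. $\mathcal{A}(\mathcal{A}g)=\mathcal{A}^2 g$) applies.

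First I would record the inequality from Lemma~\ref{lem-poinuno}: for every $\alpha>1$ and all $t\ge0$,
\[
\mathcal{A}\|h(\cdot)\|_{L^2}^2\,(t)\le\alpha\|h(t)\|_{L^2}^2+C_\alpha\,\mathcal{A}\|h'(\cdot)\|_{L^2}^2\,(t).
\]
Applying $\mathcal{A}$ to both sides and using linearity together with $\mathcal{A}(\mathcal{A}g)=\mathcal{A}^2 g$, I obtain
\[
\mathcal{A}^2\|h(\cdot)\|_{L^2}^2\,(t)\le\alpha\,\mathcal{A}\|h(\cdot)\|_{L^2}^2\,(t)+C_\alpha\,\mathcal{A}^2\|h'(\cdot)\|_{L^2}^2\,(t).
\]
Now I would bound the first term on the right once more by Lemma~\ref{lem-poinuno}, with a (possibly different) parameter $\alpha'>1$, getting
\[
\mathcal{A}^2\|h(\cdot)\|_{L^2}^2\,(t)\le\alpha\alpha'\|h(t)\|_{L^2}^2+\alpha C_{\alpha'}\,\mathcal{A}\|h'(\cdot)\|_{L^2}^2\,(t)+C_\alpha\,\mathcal{A}^2\|h'(\cdot)\|_{L^2}^2\,(t).
\]
Finally, given $\beta>1$ I would choose $\alpha=\alpha'=\sqrt{\beta}$, so that $\alpha\alpha'=\beta$, and set $C_\beta:=\max\{\sqrt{\beta}\,C_{\sqrt{\beta}},\,C_{\sqrt{\beta}}\}$; this yields \eqref{eq-poindue}.

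I do not expect any genuine obstacle: the lemma is essentially a corollary of the previous one, and the only points needing a line of care are the order-preservation/linearity of $\mathcal{A}$ under possibly infinite values and the identity \eqref{eq-avter}, already established in the excerpt. An alternative, fully self-contained route would mimic the proof of Lemma~\ref{lem-poinuno}: fix $t\ge0$ and $a>t$, integrate $\int_t^a e^{-s}(s-t)|h(s,x)|^2\ds$ by parts in $s$ (the boundary term at $s=t$ vanishes thanks to the factor $s-t$, and the term $\int_t^a e^{-s}|h(s,x)|^2\ds$ produced by differentiating $(s-t)$ is exactly why \eqref{eq-poindue} contains an $\mathcal{A}\|h'\|_{L^2}^2$ summand besides the $\mathcal{A}^2\|h'\|_{L^2}^2$ one), then apply Cauchy–Schwarz and Young's inequality $2\sqrt{bc}\le\nu b+\tfrac1\nu c$ to absorb the quadratic-in-$h$ terms, integrate over $\R^n$, and let $a\to\infty$. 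I would present the short proof via iteration of Lemma~\ref{lem-poinuno}.
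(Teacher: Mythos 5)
Your proof is correct and yields exactly the lemma's statement, but the way you obtain the key intermediate inequality
\[
\mathcal{A}^2\|h(\cdot)\|_{L^2}^2\,(t)\le\alpha\,\mathcal{A}\|h(\cdot)\|_{L^2}^2\,(t)+C_\alpha\,\mathcal{A}^2\|h'(\cdot)\|_{L^2}^2\,(t)
\]
differs from the paper's. The paper re-runs the integration-by-parts and Cauchy--Schwarz/Young computation of Lemma~\ref{lem-poinuno} from scratch, now with the weight $e^{-\tau}\tau$ (after the change of variable $g(\tau,x)=h(\tau+t,x)$), and only then invokes \eqref{eq-poinuno} once more to finish. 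You instead observe that this intermediate inequality is nothing but the operator $\mathcal{A}$ applied to both sides of \eqref{eq-poinuno} (using that $\mathcal{A}$ is a positive linear operator with nonnegative kernel, together with the identity $\mathcal{A}(\mathcal{A}g)=\mathcal{A}^2 g$ from \eqref{eq-avter}), which produces the very same inequality with the very same constants $\alpha$ and $C_\alpha=\alpha^2/(\alpha-1)$ and avoids duplicating the pointwise-in-$x$ computation. From there, both proofs bound $\mathcal{A}\|h(\cdot)\|_{L^2}^2\,(t)$ by \eqref{eq-poinuno} and set $\beta=\alpha^2$ (the paper uses the same $\alpha$ in both applications; your allowance of $\alpha\neq\alpha'$ is an immaterial generalization). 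Your shortcut is a genuine simplification worth having. One small remark: the preamble about finiteness and the bound $\mathcal{A}g\,(s)\le e^{s-t}\mathcal{A}g\,(t)$ is not actually needed --- since all quantities are nonnegative, Tonelli's theorem justifies $\mathcal{A}(\mathcal{A}g)=\mathcal{A}^2 g$ and the monotonicity of $\mathcal{A}$ unconditionally, with $+\infty$ values causing no trouble; you could simply delete that sentence.
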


\begin{proof}
 Let again $t\geq0$ and $a>t$. An easy change of variable yields
 \[
  \int_t^a e^{-(s-t)}(s-t)\,|h(s,x)|^2\ds=\int_0^{a-t}e^{-\tau}\tau\,|g(\tau,x)|^2\,d\tau
 \]
 with $g(\tau,x)=h(\tau+t,x)$. Then, arguing as in the proof of the previous lemma, we see that
 \begin{align*}
   \int_0^{a-t}e^{-\tau}\tau\,|g(\tau,x)|^2\,d\tau\leq & \; \int_0^{a-t}e^{-\tau}\,|g(\tau,x)|^2\,d\tau+\\[.2cm]
                                                       & \; +2\left(\int_0^{a-t}e^{-\tau}\tau\,|g(\tau,x)|^2\,d\tau\right)^{1/2}\left(\int_0^{a-t}e^{-\tau}\tau\,|g'(\tau,x)|^2\,d\tau\right)^{1/2}.
  \end{align*}
 Now, by Young inequality, for every $\nu\in(0,1)$
 \[
  \int_0^{a-t}e^{-\tau}\tau\,|g(\tau,x)|^2\,d\tau\leq\frac{1}{1-\nu}\int_0^{a-t}e^{-\tau}\,|g(\tau,x)|^2\,d\tau+\frac{1}{\nu(1-\nu)}\int_0^{a-t}e^{-\tau}\tau\,|g'(\tau,x)|^2\,d\tau.
 \]
 Hence, integrating over $\R^n$, changing the variables back and letting $a\to\infty$, we have
 \[
  \mathcal{A}^2\|h(\cdot)\|_{L^2}^2\,(t)\leq\alpha\mathcal{A}\|h(\cdot)\|_{L^2}^2\,(t)+C_{\alpha}\mathcal{A}^2\|h'(\cdot)\|_{L^2}^2\,(t)
 \]
 (where $\alpha=\tfrac{1}{1-\nu}$ and $C_{\alpha}=\tfrac{1}{\nu(1-\nu)}$). Finally, combining with \eqref{eq-poinuno} and setting $\beta=\alpha^2$ and $C_{\beta}=\alpha C_{\alpha}$, we obtain \eqref{eq-poindue}.
\end{proof}

\begin{remark}
 Setting $t=0$ in Lemma \ref{lem-poinuno} we recover Lemma \ref{lem-hmuprop}. In addition, note that we do not claim that any integral appearing in \eqref{eq-poinuno} and \eqref{eq-poindue} is necessarily finite.
\end{remark}


\section{Minimizers and first properties}
\label{min_prop}

The search of the minimizers mentioned in the previous sections is actually performed on an auxiliary functional. For a given a function $\phi:[0,\infty)\times\R^n\to\R$, define
\begin{equation}\label{defjeps}
 J_{\ep}(u):=H_{\ep}(u)-S(u),
\end{equation}
where
\[
 H_{\ep}(u):=\int_0^{\infty}e^{-t}\left(\frac{1}{2\ep^2}\int_{\R^n}|u''(t,x)|^2\dx+\mathcal{W}\big(u(t)\big)\right)\dt,
\]
and
\begin{equation}
 \label{eq-funzjsor}
 S(u):=\int_0^{\infty}\int_{\R^n}e^{-t}\,\phi(t,x)u(t,x)\dx\dt.
\end{equation}
One can see that $J_{\ep}$ is \emph{equivalent} to $F_{\ep}$ in the sense that, setting $\phi(t,x)=f_{\ep}(\ep t,x)$, there results $F_{\ep}(w)=\ep J_{\ep}(u)$, whenever $u$ and $w$ are related by the change of variable $u(t,x)=w(\ep t,x)$. Hence, properly scaling the boundary conditions (namely, as in \eqref{eq-bc}), the existence of minimizers $w_{\ep}$ for $F_{\ep}$ is equivalent to the existence of minimizers $u_{\ep}$ for $J_{\ep}$ and, in particular,
\[
 w_{\ep}(t,x)=u_{\ep}(t/\ep,x),\qquad t\geq0,\quad x\in\R^n.
\]
On the other hand, in contrast to $F_{\ep}$, $J_{\ep}$ presents integrals with a weight independent of $\ep$, thus simplifying the investigation.

\medskip
For functions $v=v(t,x)$, it is convenient to define the weighted $L^2$ ``norm''
\[
\Vert v\Vert_{\LL}^2:=\int_0^{\infty}\int_{\R^n}e^{-t}\,|v(t,x)|^2\dx\dt,
\]
with the proviso that we regard it as a functional (with values in $[0,+\infty]$)
rather than a norm proper.

\medskip
Throughout, for fixed $\eps$, we make the following assumptions on $\phi(t,x)$:
\begin{gather}
  \label{suppcomt} \phi(t,x)=0\qquad\forall t>T^*,\qquad\text{with}\quad \eps^2\, T^*\leq \sqrt{\ep},\\[.25cm]
\label{eq-assogr} \|\phi\|_\LL \leq \ep,\\
\label{assAq}\eps \int_0^t \mathcal{A}^2\|\phi(\cdot)\|_{L^2}^2\,(s)\,ds\leq
\gamma(\eps t+t_\eps)+\eps^2\quad\forall t\geq 0,
\end{gather}
where $t_\eps>0$ satisfies $\lim_{\ep\downarrow0} t_\eps=0$ while
\begin{equation}
\label{eq-gamma}
\gamma(t):=\int_0^t \Vert f(s)\Vert_{L^2}^2\,ds,\quad
t\geq 0,
\end{equation}
quantifies the growth in time of the forcing term $f\in L_{loc}^2([0,\infty);L^2)$ of \eqref{eq-wave}.

\begin{proposition}
 \label{prop-minimi}
 Let $w_0,w_1\in\mathrm{W}$ (with $\mathrm{W}$ defined by \eqref{eq-domW}) and $\ep\in(0,1)$. Then, under Assumption \ref{ass-W}, $J_{\ep}$ admits a minimizer $u_{\ep}$ in the class of functions $u\in H_{\text{loc}}^2([0,\infty);L^2)$ satisfying the boundary conditions
 \begin{equation}
  \label{eq-bc}
  u(0)=w_0,\qquad u'(0)=\ep w_1.
 \end{equation}
 Moreover,
 \begin{equation}
  \label{eq-levest}
  H_{\ep}(u_{\ep})\leq\mathcal{W}(w_0)+\ep C.
 \end{equation}
\end{proposition}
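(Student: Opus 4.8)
The plan is to produce $u_\ep$ by the direct method of the Calculus of Variations, applied to $J_\ep$ on the affine class
\[
\mathcal X:=\bigl\{u\in H^2_{\text{loc}}([0,\infty);L^2):\ u(0)=w_0,\ u'(0)=\ep w_1\bigr\},
\]
and then to read off the level estimate \eqref{eq-levest} by comparing $J_\ep(u_\ep)$ with the value of $J_\ep$ at the explicit affine competitor $\bar u(t,x):=w_0(x)+\ep t\,w_1(x)\in\mathcal X$, for which $\bar u''\equiv0$.

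First I would check that $J_\ep$ is finite on $\mathcal X$ and bounded from below. Since $\bar u''\equiv0$, one has $H_\ep(\bar u)=\int_0^\infty e^{-t}\mathcal W(w_0+\ep tw_1)\dt$; differentiating $s\mapsto\mathcal W(w_0+sw_1)$ and invoking the growth bound \eqref{eq-diffW} with exponent $\theta<1$ gives, via a Gr\"onwall-type estimate, a polynomial-in-$s$ bound on $\mathcal W(w_0+sw_1)$, and in fact the quantitative inequality $H_\ep(\bar u)\le\mathcal W(w_0)+\ep C$, which is reused below; moreover $|S(\bar u)|\le\|\phi\|_\LL\|\bar u\|_\LL\le\ep C$ by \eqref{eq-assogr} and a direct computation, so $J_\ep(\bar u)<\infty$. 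The key point for boundedness from below is to absorb the linear term $S$ into $H_\ep$. Applying Lemma~\ref{lem-poinuno} at $t=0$ twice (first to $u$, then to $u'$), using the boundary conditions $u(0)=w_0$, $u'(0)=\ep w_1$ and the trivial bound $\|u''\|_\LL^2\le2\ep^2H_\ep(u)$ (valid since $\mathcal W\ge0$), one obtains
\[
\|u\|_\LL\le C_0+C_1\,\ep\sqrt{H_\ep(u)},\qquad u\in\mathcal X,
\]
with $C_0,C_1$ depending only on $\|w_0\|_{L^2}$, $\|w_1\|_{L^2}$. Hence by \eqref{eq-assogr} and Young's inequality $|S(u)|\le\ep\|u\|_\LL\le\tfrac12H_\ep(u)+C$, so that $J_\ep(u)=H_\ep(u)-S(u)\ge\tfrac12H_\ep(u)-C\ge-C$ on $\mathcal X$.

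Next I would run the compactness argument. Taking a minimizing sequence $(u_n)\subset\mathcal X$ with $J_\ep(u_n)\le J_\ep(\bar u)+1$, the previous step forces $H_\ep(u_n)$ to stay bounded, hence so do $\|u_n''\|_\LL$, $\int_0^\infty e^{-t}\mathcal W(u_n(t))\dt$, and (by the bound above and the analogous one for $u_n'$) $\|u_n\|_\LL$ and $\|u_n'\|_\LL$; thus $(u_n)$ is bounded in $H^2((0,T);L^2)$ for every $T$, and a diagonal extraction yields a subsequence converging weakly in $H^2((0,T);L^2)$ for all $T$ to some $u_\ep\in\mathcal X$ (the boundary conditions pass to the limit because $u\mapsto(u(0),u'(0))$ is continuous, hence weakly continuous, on $H^2((0,T);L^2)$). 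To pass $J_\ep$ to the $\liminf$: the quadratic part $u\mapsto\int_0^\infty e^{-t}\tfrac1{2\ep^2}\|u''(t)\|_{L^2}^2\dt$ is convex and strongly lower semicontinuous, hence weakly lower semicontinuous (work on $(0,T)$ and let $T\to\infty$); for the potential part, point evaluation $u\mapsto u(t)$ is weakly continuous $H^2((0,T);L^2)\to L^2$, so $u_n(t)\rightharpoonup u_\ep(t)$ in $L^2$ for every $t$, and the weak lower semicontinuity of $\mathcal W$ on $L^2$ combined with Fatou's lemma gives $\int_0^\infty e^{-t}\mathcal W(u_\ep(t))\dt\le\liminf_n\int_0^\infty e^{-t}\mathcal W(u_n(t))\dt$; finally $S$ is continuous along $(u_n)$, since $\phi$ is supported in $[0,T^*]$ by \eqref{suppcomt} (so $e^{-t}\phi\in L^2((0,T^*);L^2)$ by \eqref{eq-assogr}) and $u_n\rightharpoonup u_\ep$ weakly in $L^2((0,T^*);L^2)$. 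Altogether $J_\ep(u_\ep)\le\liminf_nJ_\ep(u_n)=\inf_{\mathcal X}J_\ep$, so $u_\ep$ is a minimizer.

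Finally, for the level estimate, minimality gives $H_\ep(u_\ep)=J_\ep(u_\ep)+S(u_\ep)\le J_\ep(\bar u)+S(u_\ep)\le\mathcal W(w_0)+\ep C+\ep\|u_\ep\|_\LL$, using the bounds on $H_\ep(\bar u)$ and $S(\bar u)$ from the second step; inserting $\|u_\ep\|_\LL\le C_0+C_1\ep\sqrt{H_\ep(u_\ep)}$ produces a quadratic inequality of the form $X^2\le c\,\ep^2X+\mathcal W(w_0)+\ep C$ for $X:=\sqrt{H_\ep(u_\ep)}$, and since the perturbation coefficient carries a factor $\ep^2$, solving it and using $\ep<1$ yields $H_\ep(u_\ep)=X^2\le\mathcal W(w_0)+\ep C$, i.e.\ \eqref{eq-levest}. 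I expect the main obstacle to be precisely the treatment of the linear functional $S$ (absent in \cite{ST2}): it must be absorbed into $H_\ep$ both to recover coercivity and, with the \emph{small} prefactor $\ep^2$, to preserve the sharp constant $\mathcal W(w_0)$ in \eqref{eq-levest}; this is exactly what the structural hypotheses \eqref{suppcomt}--\eqref{eq-assogr} on $\phi$ are designed to make possible, while every other step follows the scheme of \cite{ST2}, to which we refer for the remaining details.
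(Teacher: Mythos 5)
Your proposal is correct and follows essentially the same route as the paper: direct method in $H^2_{\text{loc}}$, absorption of the linear term $S$ into $H_\ep$ via Lemma~\ref{lem-hmuprop} (which is Lemma~\ref{lem-poinuno} at $t=0$, applied twice) together with the boundary conditions and \eqref{eq-assogr}, weak lower semicontinuity of $H_\ep$ combined with weak continuity of $S$ on $[0,T^*]$, and the competitor $\psi(t,x)=w_0(x)+\ep t w_1(x)$ for the level estimate. The only cosmetic difference is in deriving \eqref{eq-levest}: you solve a quadratic inequality $X^2\le c\ep^2X+\mathcal{W}(w_0)+\ep C$ in one shot, whereas the paper first bounds $J_\ep(u_\ep)\le C$, bootstraps to $\|u_\ep''\|_\LL\le\ep C$ via Young, and then re-inserts this into the coercivity bound; both variants are standard and yield the same conclusion.
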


\noindent In order to prove Proposition \ref{prop-minimi}, we recall the following facts (for more see \cite[Lemma 2.3]{ST1}).

\begin{lemma}
 \label{lem-hmuprop}
 If $u\in H^2_{\text{loc}}([0,\infty);L^2)$, then
 \begin{equation}
  \label{eq-hmuno}
\Vert u'\Vert_\LL^2
\leq 2\,\Vert u'(0)\Vert_{L^2}^2
+4\,\Vert u''\Vert_\LL^2
 \end{equation}
 and
 \begin{align}
  \label{eq-hmdue}
\Vert u\Vert_\LL^2
\leq 2\,\Vert u(0)\Vert_{L^2}^2 +
8\,\Vert u'(0)\Vert_{L^2}^2
+16\,
\Vert u''\Vert_\LL^2.
 \end{align}
\end{lemma}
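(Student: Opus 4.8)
The plan is to prove both inequalities by a single weighted one‑dimensional Poincaré estimate, carried out slicewise in $x$ and then integrated over $\R^n$. In fact this is precisely the computation underlying Lemma~\ref{lem-poinuno} evaluated at $t=0$: since $\mathcal{A}g\,(0)=\int_0^\infty e^{-s}g(s)\ds$, one has $\mathcal{A}\|h(\cdot)\|_{L^2}^2\,(0)=\Vert h\Vert_\LL^2$, so \eqref{eq-poinuno} with $h=u'$ and $\alpha=2$ (hence $C_\alpha=\alpha^2/(\alpha-1)=4$) already gives \eqref{eq-hmuno}, and a second application with $h=u$ together with \eqref{eq-hmuno} gives \eqref{eq-hmdue}. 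For completeness I would nonetheless spell out the short self‑contained argument.

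First I would fix $a>0$. Since $u\in H^2_{\text{loc}}([0,\infty);L^2)$, by Tonelli's theorem $u'(\cdot\,,x)$ and $u''(\cdot\,,x)$ lie in $L^2((0,a))$ for a.e.\ $x\in\R^n$, so that $s\mapsto e^{-s}|u'(s,x)|^2$ is absolutely continuous on $[0,a]$ for a.e.\ $x$. Differentiating and integrating over $(0,a)$, the boundary term at $s=a$ equals $-e^{-a}|u'(a,x)|^2\le 0$ and can be discarded, leaving
\[
\int_0^a e^{-s}|u'(s,x)|^2\ds\le |u'(0,x)|^2+2\int_0^a e^{-s}\,u'(s,x)\,u''(s,x)\ds .
\]
Estimating the last term by Cauchy--Schwarz and then by Young's inequality $2\sqrt{bc}\le\tfrac12 b+2c$, the resulting term $\tfrac12\int_0^a e^{-s}|u'|^2$ is absorbed into the left‑hand side, yielding
\[
\int_0^a e^{-s}|u'(s,x)|^2\ds\le 2\,|u'(0,x)|^2+4\int_0^a e^{-s}|u''(s,x)|^2\ds .
\]
Integrating in $x$ over $\R^n$ and letting $a\to\infty$ (by monotone convergence, with no finiteness needed) gives \eqref{eq-hmuno}.

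For \eqref{eq-hmdue} I would repeat the very same computation with $u$ in place of $u'$ and $u'$ in place of $u''$, obtaining $\Vert u\Vert_\LL^2\le 2\Vert u(0)\Vert_{L^2}^2+4\Vert u'\Vert_\LL^2$, and then insert the bound \eqref{eq-hmuno} just proved; the constants combine to give exactly $2$, $8$ and $16$, as claimed.

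There is no real obstacle in this argument: the only points requiring a little care are the measure‑theoretic justification of the slicewise integration by parts (Tonelli, and absolute continuity of $s\mapsto e^{-s}|u'(s,x)|^2$ for a.e.\ $x$), the harmless discarding of the nonnegative boundary term at $s=a$, and checking that every step remains valid when some of the integrals are $+\infty$, so that no a priori assumption $\Vert u''\Vert_\LL<\infty$ is needed — which is important, since \eqref{eq-hmuno}--\eqref{eq-hmdue} are later applied without such knowledge.
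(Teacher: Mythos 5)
Your proposal is correct and takes essentially the same route as the paper. The paper does not prove Lemma~\ref{lem-hmuprop} in place (it cites~\cite[Lemma~2.3]{ST1}), but it proves the more general Lemma~\ref{lem-poinuno} by exactly the slicewise weighted integration-by-parts argument you give and remarks explicitly that setting $t=0$ there recovers Lemma~\ref{lem-hmuprop}; your shortcut (Lemma~\ref{lem-poinuno} at $t=0$ with $\alpha=2$, applied to $h=u'$ for \eqref{eq-hmuno} and chained with $h=u$ for \eqref{eq-hmdue}) and your self-contained computation, with the absorption on $(0,a)$ and the monotone-convergence passage $a\to\infty$, both check out, constants included.
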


\begin{proof}[Proof of Proposition~\ref{prop-minimi}]
Lat $M$ be the set of functions in  $H^2_{\text{loc}}([0,\infty);L^2)$ satisfying
\eqref{eq-bc}. If $u\in M$, then $S(u)$ is finite by \eqref{suppcomt}, so that
$J_\eps(u)$ is well defined (possibly equal to $+\infty$).
If $J_\eps(u)$ is finite, then, since $\mathcal{W}\geq0$,
the finiteness of $H_\ep(u)$ implies
that the last integral
in \eqref{eq-hmdue}
is finite, and using Cauchy-Schwarz, \eqref{eq-assogr} and \eqref{eq-hmdue} we have
\[
|S(u)|\leq
\|\phi\|_\LL
\|u\|_\LL
\leq\eps C
\left(1+\|u''\|_\LL\right),
\]
where $C$ takes into account (via \eqref{eq-bc}) also the $L^2$ norms of $u(0)$ and $u'(0)$.
 Moreover,
from the definition of $J_\eps$ and last inequality we have
\begin{equation}\label{coerc}
   J_{\ep}(u)\geq
\frac{1}{2\ep^2}\Vert u''\Vert_\LL^2
+\int_0^\infty e^{-t}\mathcal{W}\bigl(u(t)\bigr)\,dt
-
\eps C
\left(1+\|u''\|_\LL\right),
\end{equation}
so that
$\|u''\|_\LL$ can be controlled in terms of
$J_\eps(u)$: using again \eqref{eq-hmuno} and \eqref{eq-hmdue}, we see that
$J_{\ep}$ is coercive in $M$ with respect to the topology of
$H_{\text{loc}}^2([0,\infty);L^2)$,
so that every
minimizing sequence
has a subsequence weakly convergent in $H_{\text{loc}}^2([0,\infty);L^2)$, which also preserves \eqref{eq-bc}.
The weak semicontinuity of $H_\eps(u)$ (building on Assumption~\ref{ass-W}) was proved
in \cite[proof of Lemma 3.1]{ST2}: since $S(u)$ is a weakly continuous functional, the existence
of a minimizer $u_\eps$ is established.

Now set $\psi(t,x):=w_0(x)+\ep t w_1(x)$, and observe
that $\psi\in M$ and $\psi''\equiv 0$. Moreover in
 \cite[proof of Lemma 3.1]{ST2}
it is proved that
 \[
 H_\eps(\psi)=
  \int_0^{\infty}e^{-t}\,\mathcal{W}\big(\psi(t)\big)\dt\leq\mathcal{W}(w_0)+C\ep,
 \]
 while by a direct computation, using Cauchy-Schwarz and \eqref{eq-assogr}, one has
 \[
  -S(\psi)\leq\left(\|w_0\|_{L^2}+\sqrt{2}\,\ep\|w_1\|_{L^2}\right)
  \|\phi\|_\LL
  \leq C\eps.
 \]
 Thus $J_\ep(\psi)\leq \mathcal{W}(w_0)+C\ep$, and
then also $J_\ep(u_\ep)\leq \mathcal{W}(w_0)+C\ep$
since $u_\ep$
is a minimizer. So, in particular,
$J_\ep(u_\ep)\leq C$: combining with \eqref{coerc} (written with $u=u_\eps$),
by Young's inequality one
can easily obtain $\|u_\ep''\|_\LL \leq \eps C$ as a byproduct. This, in turn,
can be plugged into \eqref{coerc} (with $u=u_\eps$) to estimate the last term,
thus finding
\begin{equation*}
   J_{\ep}(u_\ep)\geq
\frac{1}{2\ep^2}\Vert u''_\ep\Vert_\LL^2
+\int_0^\infty e^{-t}\mathcal{W}\bigl(u_\ep(t)\bigr)\,dt
-
\eps C.
\end{equation*}
Finally, \eqref{eq-levest} follows from the last inequality,
recalling that $J_\ep(u_\ep)\leq \mathcal{W}(w_0)+C\ep$.
 \end{proof}

\begin{remark}
 In the sequel we will always assume that $\ep\in(0,1)$, as in Proposition \ref{prop-minimi}.
\end{remark}

Now, we introduce some notation. Given a minimizer $u_{\ep}$ of $J_{\ep}$, we define
\begin{equation}
 \label{eq-WandD}
 \mathcal{W}_{\ep}(t):=\mathcal{W}\big(u_{\ep}(t)\big),\quad\forall t\geq0,\qquad D_{\ep}(t):=\displaystyle\frac{1}{2\ep^2}\|u_{\ep}''(t)\|_{L^2}^2,\quad\mbox{for a.e.  }t>0,
\end{equation}
and
\[
 L_{\ep}(t):=D_{\ep}(t)+\mathcal{W}_{\ep}(t).
\]
We also set
\[
 \Phi_{\ep}(t):=\big(\phi(t),u_{\ep}'(t)\big)_{L^2},\qquad\forall t\geq0,
\]
and define the \emph{kinetic energy} function as
\begin{equation}
 \label{eq-kinen}
 K_{\ep}(t):=\frac{1}{2\ep^2}\|u_{\ep}'(t)\|_{L^2}^2,\qquad\forall t\geq0.
\end{equation}
Note that $K_{\ep}$ is absolutely continuous on intervals $[0,T]$, with $T>0$, and that
\[
 K_{\ep}'(t)=\frac{1}{\ep^2}\big(u_{\ep}'(t),u_{\ep}''(t)\big)_{L^2},\qquad\mbox{for a.e. }t>0.
\]

\begin{proposition}
 \label{prop-jder}
 Let $w_0,\,w_1\text{ and }\mathcal{W}$ satisfy the assumptions of Proposition \ref{prop-minimi} and let $u_{\ep}$ be a minimizer of $J_{\ep}$. Then, for every $g\in C^2([0,\infty))$ constant for large $t$ and with $g(0)=0$,
 \begin{equation}
  \label{eq-relder}
  \begin{array}{l}
   \displaystyle\int_0^{\infty}e^{-t}\,\big(g'(t)-g(t)\big)L_{\ep}(t)\dt+\int_0^{\infty}e^{-t}\,g(t)\Phi_{\ep}(t)\dt+\\[.5cm]
   \hspace{5cm}\displaystyle-\int_0^{\infty}e^{-t}\,\big(4g'(t)D_{\ep}(t)+g''(t)K_{\ep}'(t)\big)\dt=g'(0)R(u_{\ep}),
  \end{array}
 \end{equation}
 where the linear functional
 \begin{equation}
  \label{eq-resto}
  R(u_{\ep}):=\ep\int_0^{\infty}e^{-t}\,t\,\left(-\left\langle\nabla\mathcal{W}\big(u_{\ep}(t)\big),w_1\right\rangle+(\phi(t),w_1)_{L^2}\right)\dt
 \end{equation}
 satisfies the estimate
  \begin{equation}
  \label{eq-restest}
  |R(u_{\ep})|\leq C\ep.
   \end{equation}
\end{proposition}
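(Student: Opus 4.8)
The plan is to obtain \eqref{eq-relder} as the first-variation (Euler--Lagrange) condition for $J_\ep$ with respect to an \emph{inner variation} of the minimizer $u_\ep$, namely a smooth reparametrization of the time variable, suitably corrected by an outer variation so as to preserve the boundary conditions \eqref{eq-bc}; this is the nonhomogeneous counterpart of the argument carried out in \cite{ST2} for $H_\ep$, the new features being the contribution of the linear term $S$ and the $\phi$-part of the residual $R$. Concretely, fix $g\in C^2([0,\infty))$ with $g(0)=0$ and $g$ constant for $t$ large; for $|\lambda|$ small (depending only on $\sup|g'|$), the map $\sigma_\lambda(t):=t+\lambda g(t)$ is an increasing $C^2$ diffeomorphism of $[0,\infty)$ onto itself, which reduces to the translation $t\mapsto t+\lambda g(\infty)$ once $t$ is large; let $\tau_\lambda$ be its inverse. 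I consider the competitor
\[
 \tilde u_\lambda(t,x):=u_\ep\big(\sigma_\lambda(t),x\big)-\lambda\, g'(0)\,\ep\, t\, w_1(x).
\]
Since $(\ep t\, w_1)''\equiv0$ and $\sigma_\lambda(0)=0$, one checks at once that $\tilde u_\lambda(0)=w_0$ and $\tilde u_\lambda'(0)=\ep w_1$, so $\tilde u_\lambda$ is admissible; moreover $\tilde u_\lambda\in H^2_{\mathrm{loc}}([0,\infty);L^2)$, and $J_\ep(\tilde u_\lambda)<\infty$ for $|\lambda|$ small. For the latter, the kinetic part is controlled, after the change of variable $s=\sigma_\lambda(t)$, by $\|u_\ep''\|_\LL$ and $\|u_\ep'\|_\LL$ (finite by Lemma~\ref{lem-hmuprop} and \eqref{eq-levest}), while for the potential part one uses that \eqref{eq-diffW} yields, upon integrating along segments and exploiting $\theta<1$, the \emph{polynomial} growth estimate $\mathcal W(v+z)\le C(1+\mathcal W(v)+\|z\|_{\mathrm W}^{1/(1-\theta)})$; since $\sigma_\lambda$ is eventually a translation, this leaves the weight $e^{-t}$ essentially unchanged and only adds a term integrable against $e^{-t}$.

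Since $u_\ep$ minimizes $J_\ep$ and $\tilde u_0=u_\ep$, the scalar function $\lambda\mapsto J_\ep(\tilde u_\lambda)$ attains a minimum at $\lambda=0$, hence $\frac{d}{d\lambda}J_\ep(\tilde u_\lambda)\big|_{\lambda=0}=0$. I compute this derivative by splitting $J_\ep(\tilde u_\lambda)$ into its kinetic, potential and source parts. The kinetic part equals $\frac1{2\ep^2}\int_0^\infty e^{-t}\|v_\lambda''\|_{L^2}^2\dt$ with $v_\lambda:=u_\ep\circ\sigma_\lambda$ (the correction drops out, having vanishing second derivative); inserting $v_\lambda''=\lambda g''\,u_\ep'(\sigma_\lambda)+(1+\lambda g')^2u_\ep''(\sigma_\lambda)$, expanding the square, changing variables and using the identities $(1+\lambda g'(\tau_\lambda))^4\tau_\lambda'=(\tau_\lambda')^{-3}$, $\partial_\lambda\tau_\lambda|_{0}=-g$, $\partial_\lambda\tau_\lambda'|_{0}=-g'$, its $\lambda$-derivative at $0$ is $\int_0^\infty e^{-t}(g+3g')D_\ep\dt+\int_0^\infty e^{-t}g''K_\ep'\dt$. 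For the potential part $\int_0^\infty e^{-t}\mathcal W(\tilde u_\lambda(t))\dt$ I treat reparametrization and correction separately: after $s=\sigma_\lambda(t)$ the reparametrized integral is $\int_0^\infty e^{-\tau_\lambda(s)}\tau_\lambda'(s)\,\mathcal W\big(u_\ep(s)\big)\ds$, whose $\lambda$-derivative at $0$ only acts on the weight and gives $\int_0^\infty e^{-s}(g-g')\mathcal W_\ep\ds$, while the correction, by G\^ateaux differentiability of $\mathcal W$, contributes $-g'(0)\,\ep\int_0^\infty e^{-t}t\,\langle\nabla\mathcal W(u_\ep(t)),w_1\rangle\dt$ (note this pairs $\nabla\mathcal W$ only with $w_1\in\mathrm W$). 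Finally the source part $-\int_0^\infty e^{-t}(\phi(t),\tilde u_\lambda(t))_{L^2}\dt$ is linear in $\tilde u_\lambda$; differentiating directly in $\lambda$ gives $-\int_0^\infty e^{-t}g\,\Phi_\ep\dt+g'(0)\,\ep\int_0^\infty e^{-t}t\,(\phi(t),w_1)_{L^2}\dt$. All these differentiations under the integral sign are justified by dominated convergence, with majorants uniform for $|\lambda|\le\lambda_0$ supplied by the estimates of the first paragraph.

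Summing the three contributions and equating to zero, the terms proportional to $g-g'$ add up (since $L_\ep=D_\ep+\mathcal W_\ep$ and $g+3g'=(g-g')+4g'$) to $\int_0^\infty e^{-t}(g-g')L_\ep\dt$, the two remaining terms carrying the prefactor $g'(0)\ep$ (with the integrals $\int_0^\infty e^{-t}t\langle\nabla\mathcal W(u_\ep),w_1\rangle\dt$ and $\int_0^\infty e^{-t}t(\phi,w_1)_{L^2}\dt$, with opposite signs) combine into precisely $g'(0)R(u_\ep)$ with $R$ as in \eqref{eq-resto}, and one is left with $\int_0^\infty e^{-t}(g-g')L_\ep\dt+\int_0^\infty e^{-t}(4g'D_\ep+g''K_\ep')\dt-\int_0^\infty e^{-t}g\Phi_\ep\dt=-g'(0)R(u_\ep)$; changing the overall sign and reordering terms yields \eqref{eq-relder}. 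I expect the main obstacle to be precisely the rigorous justification of the above changes of variable and differentiations — in particular the fact that, since $u_\ep'(t)$ need not belong to $\mathrm W$, the a priori meaningless pairing $\langle\nabla\mathcal W(u_\ep(t)),u_\ep'(t)\rangle$ must be avoided, which is why the $\lambda$-derivative of the potential term is taken \emph{after} the change of variable rather than via the chain rule. Apart from the new $S$- and $\phi$-terms, all of this parallels the homogeneous computation in \cite{ST2}, to which I would refer for the shared details.

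It remains to prove \eqref{eq-restest}. By \eqref{eq-diffW}, $|\langle\nabla\mathcal W(u_\ep(t)),w_1\rangle|\le C\|w_1\|_{\mathrm W}\big(1+\mathcal W_\ep(t)^\theta\big)$, and by Cauchy--Schwarz $|(\phi(t),w_1)_{L^2}|\le\|\phi(t)\|_{L^2}\|w_1\|_{L^2}$; hence, from \eqref{eq-resto},
\[
 |R(u_\ep)|\le C\ep\|w_1\|_{\mathrm W}\Big(1+\int_0^\infty t\,e^{-t}\mathcal W_\ep(t)^\theta\dt\Big)+\ep\|w_1\|_{L^2}\Big(\int_0^\infty t^2e^{-t}\dt\Big)^{1/2}\|\phi\|_\LL.
\]
For the first integral I apply H\"older's inequality with exponents $\tfrac1{1-\theta}$ and $\tfrac1\theta$ to the factorization $t\,e^{-t}\mathcal W_\ep^\theta=\big(t\,e^{-(1-\theta)t}\big)\big(e^{-t}\mathcal W_\ep\big)^\theta$, obtaining the bound $\big(\int_0^\infty t^{1/(1-\theta)}e^{-t}\dt\big)^{1-\theta}\big(\int_0^\infty e^{-t}\mathcal W_\ep\dt\big)^\theta$; the first factor is a finite Gamma integral, and the second is bounded uniformly in $\ep$ because $\int_0^\infty e^{-t}\mathcal W_\ep\dt\le H_\ep(u_\ep)\le\mathcal W(w_0)+\ep C$ by \eqref{eq-levest}. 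For the source term, $\|\phi\|_\LL\le\ep$ by \eqref{eq-assogr}. Since $\ep\in(0,1)$, collecting these estimates gives $|R(u_\ep)|\le C\ep$ with $C$ independent of $\ep$, as claimed.
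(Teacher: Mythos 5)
Your proof is correct and follows essentially the same route as the paper: an inner (time-reparametrization) variation of $u_\ep$, corrected by a linear term in $\ep t w_1$ to preserve the boundary conditions \eqref{eq-bc}, followed by differentiation of $J_\ep$ at the minimizer. Your diffeomorphism $\sigma_\lambda(t)=t+\lambda g(t)$ and competitor $\tilde u_\lambda$ coincide (up to relabeling $\lambda=-\delta$) with the paper's $\varphi_\delta$ and $U_\delta$, the splitting into kinetic/potential/source contributions and the change-of-variables trick to avoid the a priori ill-defined pairing $\langle\nabla\mathcal W(u_\ep),u_\ep'\rangle$ are the same as in \cite{ST2}, and your estimate of $R(u_\ep)$ (Hölder on $t e^{-t}\mathcal W_\ep^\theta$ plus \eqref{eq-levest}, and Cauchy--Schwarz with \eqref{eq-assogr} for the $\phi$-term) matches the paper's argument up to using Hölder in place of Young.
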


\begin{proof}
We proceed exactly as in \cite[proof of Proposition 4.4]{ST2}.
For small $\delta$,  we use the diffeomorphism  $\varphi_{\delta}(t):=t-\delta g(t)$ to define
   $U_{\delta}(t):=u_{\ep}\big(\varphi_{\delta}(t)\big)+t\ep\delta g'(0)w_1$,
    which is an admissible competitor of $u_\eps$ in the minimization
    of $J_\eps$, since it satisfies the initial conditions
    \eqref{eq-bc}. Then, since $u_\eps$ is a minimizer and
    $U_\delta=u_\eps$ when $\delta=0$, one has
\begin{equation}
\label{deriv}
  \left.\frac{\partial}{\partial\delta}J_{\ep}(U_{\delta})\right|_{\delta=0}=\left.\frac{\partial}{\partial\delta}H_{\ep}(U_{\delta})\right|_{\delta=0}-\left.\frac{\partial}{\partial\delta}S(U_{\delta})\right|_{\delta=0}=0,
\end{equation}
which (computing the derivatives) yields \eqref{eq-relder}. Indeed,
the derivative of $H_\eps(U_\delta)$ has been computed in
\cite[proof of Proposition 4.4]{ST2}, and it produces all the terms
in \eqref{eq-relder} except, of course, the integral of $\Phi_\ep$ and
the integrand involving $\phi(t)$ in \eqref{eq-resto}. On the other hand,
recalling \eqref{eq-funzjsor}, using \eqref{suppcomt}, \eqref{eq-assogr}
 and dominated convergence
one can check that
 \begin{align*}
  \left.\frac{\partial}{\partial\delta}S(U_{\delta})\right|_{\delta=0} & \, =
\int_0^\infty e^{-t}\int_{\R^n}\phi(t,x)\big(-g(t)u'_\eps(t,x)
+t\eps g'(0)w_1(t,x)\big)\,dx\,dt\\[.2cm]
& \, =
-
\int_0^\infty e^{-t} g(t)\Phi_\eps(t)\,dt+\eps g'(0)
\int_0^\infty e^{-t} t \,\,\big(\phi(t),w_1\big)_{L^2}\,dt,
\end{align*}
whence \eqref{deriv} reduces to \eqref{eq-relder}.

 Finally, combining \eqref{eq-diffW} and \eqref{eq-levest} as in \cite{ST2}, one has
 \begin{align*}
 \left|\int_0^{\infty}e^{-t}\,t\,\left\langle\nabla\mathcal{W}\big(u_{\ep}(t)\big),w_1
 \right\rangle\dt\right| & \leq \, C \bigg(1+\int_0^\infty e^{-t}\,t\mathcal{W}^{\theta}(u_\ep(t))\dt\bigg)\\[.3cm]
 & \leq \, C\big(1+H_\ep(u_\ep)\big)\,\leq\,C(1+\ep),
 \end{align*}
 while from Cauchy-Schwarz and \eqref{eq-assogr}
 \[
  \left|\int_0^{\infty}e^{-t}\,t\,\big(\phi(t),w_1\big)_{L^2}\dt
\right|
\leq \Vert w_1\Vert_{L^2}
\int_0^{\infty}e^{-t}\,t\,\Vert \phi(t)\Vert_{L^2}\dt
\leq
\Vert w_1\Vert_{L^2}
\left(\int_0^{\infty}e^{-t}\,t^2\,dt\right)^{\frac 1 2}
\Vert \phi\Vert_\LL\leq C\eps
 \]
 and hence inequality \eqref{eq-restest} is satisfied.
\end{proof}

\noindent This result has an immediate consequence.

\begin{corollary}
Using the notation of Section \ref{sec-average} for the operator $\mathcal{A}$, one has
 \begin{equation}
  \label{eq-relzero}
  \mathcal{A}^2L_{\ep}\,(0)+4\mathcal{A}D_{\ep}\,(0)-\mathcal{A}L_{\ep}\,(0)=\mathcal{A}^2\Phi_{\ep}\,(0)-R(u_{\ep})
 \end{equation}
 and
 \begin{equation}
  \label{eq-relt}
  \mathcal{A}^2L_{\ep}\,(t)+4\mathcal{A}D_{\ep}\,(t)-
\mathcal{A}L_{\ep}\,(t)=\mathcal{A}^2\Phi_{\ep}\,(t)-K_{\ep}'(t),
\qquad\mbox{for a.e. } t>0.
 \end{equation}
\end{corollary}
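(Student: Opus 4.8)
The plan is to derive both identities from the single relation \eqref{eq-relder} by specializing the test function $g$ to a piecewise-affine profile and then rewriting the resulting exponentially weighted integrals via the elementary formulas
\[
 \int_\tau^{\infty}e^{-t}h(t)\dt=e^{-\tau}\,\mathcal{A}h\,(\tau),\qquad \int_\tau^{\infty}e^{-t}(t-\tau)\,h(t)\dt=e^{-\tau}\,\mathcal{A}^2h\,(\tau)\qquad(\tau\geq0),
\]
valid whenever $\mathcal{A}^2|h|\,(0)<\infty$; in particular, at $\tau=0$ one has $\mathcal{A}h\,(0)=\int_0^\infty e^{-t}h\dt$ and $\mathcal{A}^2h\,(0)=\int_0^\infty e^{-t}t\,h\dt$. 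With the notation \eqref{eq-WandD}--\eqref{eq-kinen}, each term of \eqref{eq-relder} is then read off as a combination of $\mathcal{A}$ and $\mathcal{A}^2$ evaluated at the base point.

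For \eqref{eq-relzero} I would take $g(t)=-t$, so that $g'-g=t-1$, $-4g'=4$, $g''\equiv0$ and $g'(0)=-1$; then \eqref{eq-relder} becomes
\[
 \int_0^{\infty}e^{-t}(t-1)L_{\ep}\dt-\int_0^{\infty}e^{-t}t\,\Phi_{\ep}\dt+4\int_0^{\infty}e^{-t}D_{\ep}\dt=-R(u_{\ep}),
\]
and recognizing the three integrals as $\mathcal{A}^2L_{\ep}\,(0)-\mathcal{A}L_{\ep}\,(0)$, $\mathcal{A}^2\Phi_{\ep}\,(0)$ and $\mathcal{A}D_{\ep}\,(0)$ gives exactly \eqref{eq-relzero}. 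For \eqref{eq-relt} I would fix $\tau>0$ and take $g(t)=-(t-\tau)_+$: now $g'(0)=0$, which suppresses the $R(u_{\ep})$ term, while on $(\tau,\infty)$ one has $g'-g=(t-\tau)-1$ and $-4g'=4$; the new ingredient is the corner at $\tau$, where $g''=-\delta_{\tau}$, so the term $-\int_0^\infty e^{-t}g''K_{\ep}'\dt$ contributes $e^{-\tau}K_{\ep}'(\tau)$ at every Lebesgue point of $t\mapsto e^{-t}K_{\ep}'(t)$, hence for a.e.\ $\tau>0$. Collecting the terms, applying the two formulas above with base point $\tau$, and dividing by $e^{-\tau}$, one arrives precisely at \eqref{eq-relt}.

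The only genuine subtlety is that neither $g(t)=-t$ nor $g(t)=-(t-\tau)_+$ is admissible in \eqref{eq-relder} (the first is not constant for large $t$, the second is not $C^2$), so both computations must be legitimized by approximation: one applies \eqref{eq-relder} to $C^2$ functions $g_N$ coinciding with the chosen profile on $[0,N]$ and constant on $[N+1,\infty)$ (mollified near $\tau$ in the second case), and then lets $N\to\infty$. I expect this limit passage to be the main — though essentially routine — point: the contributions of the transition layer $[N,N+1]$ and of the mollifier are dominated by $N\int_{N}^{N+1}e^{-t}L_{\ep}\dt\le\int_N^\infty e^{-t}t\,L_{\ep}\dt$, by $\int_N^\infty e^{-t}D_{\ep}\dt$ and by $\int_N^\infty e^{-t}|K_{\ep}'|\dt$, all of which tend to $0$ once one has the finiteness of the relevant exponential moments: $\mathcal{A}D_{\ep}\,(0)<\infty$ (immediate from \eqref{eq-levest}), $\int_0^\infty e^{-t}|K_{\ep}'(t)|\dt<\infty$ (from Cauchy--Schwarz, \eqref{eq-hmuno} and the bound $\|u_{\ep}''\|_{\LL}\le\ep C$ of Proposition~\ref{prop-minimi}), and, crucially, $\mathcal{A}^2L_{\ep}\,(0)=\int_0^\infty e^{-t}t\,L_{\ep}(t)\dt<\infty$. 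This last fact — finiteness of the second-order exponential moment of the Lagrangian — is the nontrivial ingredient; it belongs to the standard framework (it reflects that the finite-energy minimizer selects the non-exponentially-growing solutions of the Euler--Lagrange equation \eqref{eul}) and is obtained exactly as in \cite{ST2}. Granting it, dominated and monotone convergence close both arguments, the error terms vanishing in the limit.
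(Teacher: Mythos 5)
Correct, and essentially the same as the paper's argument: both derive \eqref{eq-relzero} and \eqref{eq-relt} by specializing \eqref{eq-relder} to the inadmissible profiles $g(t)=\pm t$ and $g(t)=\pm(t-\tau)^+$ respectively, and then legitimizing the choice by approximation with admissible test functions, deferring to \cite[Corollary 4.5]{ST2} for the finiteness of the exponential moments of $L_\ep$ needed to pass to the limit. Your sign convention $g\mapsto -g$ changes nothing by linearity of \eqref{eq-relder}, and your truncation-and-mollification scheme makes explicit what the paper compresses into ``approximate from below as in \cite{ST2}''; you also correctly observe (implicitly, via \eqref{suppcomt}) that the new $\Phi_\ep$ term is the harmless one, the paper's stated point being that only this term is a novelty relative to \cite{ST2}.
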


\begin{proof} Recalling \eqref{eq-avter},
\eqref{eq-relzero} is formally obtained choosing $g(t)=t$ in
\eqref{eq-relder}, but this goes beyond the assumptions of
Proposition~\ref{prop-jder}. However, as shown
in \cite[proof of Corollary 4.5]{ST2},
it suffices to approximate $g(t)=t$ from below,
 by suitable functions $g_k$ satisfying
the assumptions of
Proposition~\ref{prop-jder}, and pass to the limit in \eqref{eq-relder}.
Since one can arrange for $g'_k(0)=1$, only the integrals on the left
hand side of
\eqref{eq-relder} are actually involved, and the one with $\Phi_\ep$
(the only novelty with respect to \cite[Corollary 4.5]{ST2})
 passes to the limit by dominated convergence, using \eqref{suppcomt}
and \eqref{eq-assogr}.

Finally, also \eqref{eq-relt} is proved exactly as in
\cite[proof of Corollary 4.5]{ST2} (the only novelty being the term
with $\Phi_\eps$ that can be treated as described above), and we
omit the details. We just mention
that \eqref{eq-relt} (if written with $T$ in place of $t$)
is formally obtained choosing $g(t)=(t-T)^+$ in
\eqref{eq-relder}: then $g''(t)$ is a Dirac delta at $t=T$,
which produces the last term in \eqref{eq-relt}.
\end{proof}


\section{The approximate energy}
\label{sec-est_en}

Now we study the \emph{approximate energy}, a quantity that has been first introduced in \cite{ST2} and whose investigation is crucial for the proof of our main results.

\begin{definition}
 Let $u_{\ep}$ be a minimizer of $J_{\ep}$ obtained via Proposition \ref{prop-minimi}. The \emph{approximate energy} associated with $u_{\ep}$ is the function $E_{\ep}:[0,\infty)\to[0,\infty)$ defined by
 \begin{equation}
  \label{eq-appenesp}
  E_{\ep}(t):=\frac{1}{2\ep^2}\int_{\R^n}|u_{\ep}'(t,x)|^2\dx+\int_t^{\infty}e^{-(s-t)}(s-t)\,\mathcal{W}\big(u_{\ep}(s)\big)\ds.
 \end{equation}
\end{definition}

\begin{remark}
 Recalling \eqref{eq-avter}, \eqref{eq-WandD} and \eqref{eq-kinen}, \eqref{eq-appenesp} reads
 \[
  E_{\ep}(t)=K_{\ep}(t)+\mathcal{A}^2\mathcal{W}_{\ep}\,(t),\qquad t\geq0.
 \]
 In addition we stress that, in view of \eqref{defea}, $\mathcal{E}_\ep(t)=E_\ep(t/\ep)$.
\end{remark}

\medskip
The value of $E_{\ep}$ at $t=0$ can be estimated simply using \eqref{eq-relzero}.

\begin{lemma}[Estimate for $E_{\ep}(0)$]
 We have
 \begin{equation}
  \label{eq-appenzero}
  E_{\ep}(0)\leq\frac{1}{2}\|w_1\|_{L^2}^2+\mathcal{W}(w_0)+C\sqrt{\ep}.
 \end{equation}
\end{lemma}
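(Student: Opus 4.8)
The plan is to bound $E_\ep(0)$ directly through the identity \eqref{eq-relzero}, combining it with the Poincar\'e-type estimates of Section~\ref{sec-average}, the level estimate \eqref{eq-levest}, and the assumption \eqref{assAq} on $\phi$. First I would recall that, by the remark following the definition, $E_\ep(0)=K_\ep(0)+\mathcal{A}^2\mathcal{W}_\ep\,(0)$; since $L_\ep=D_\ep+\mathcal{W}_\ep$, rewriting \eqref{eq-relzero} gives
\[
\mathcal{A}^2\mathcal{W}_\ep\,(0)=\mathcal{A}L_\ep\,(0)-4\mathcal{A}D_\ep\,(0)-\mathcal{A}^2D_\ep\,(0)+\mathcal{A}^2\Phi_\ep\,(0)-R(u_\ep),
\]
so that, using $\mathcal{A}L_\ep=\mathcal{A}D_\ep+\mathcal{A}\mathcal{W}_\ep$,
\[
E_\ep(0)=K_\ep(0)+\mathcal{A}\mathcal{W}_\ep\,(0)-3\mathcal{A}D_\ep\,(0)-\mathcal{A}^2D_\ep\,(0)+\mathcal{A}^2\Phi_\ep\,(0)-R(u_\ep).
\]
Since $D_\ep\geq0$, the two terms $-3\mathcal{A}D_\ep\,(0)-\mathcal{A}^2D_\ep\,(0)$ are nonpositive and can simply be dropped, giving the bound
\[
E_\ep(0)\leq K_\ep(0)+\mathcal{A}\mathcal{W}_\ep\,(0)+\mathcal{A}^2\Phi_\ep\,(0)+|R(u_\ep)|.
\]

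Next I would estimate each surviving term. The kinetic term is immediate: $K_\ep(0)=\tfrac1{2\ep^2}\|u_\ep'(0)\|_{L^2}^2=\tfrac12\|w_1\|_{L^2}^2$ by the boundary condition \eqref{eq-bc}. The potential term $\mathcal{A}\mathcal{W}_\ep\,(0)=\int_0^\infty e^{-t}\mathcal{W}(u_\ep(t))\dt=H_\ep(u_\ep)-\tfrac1{2\ep^2}\|u_\ep''\|_\LL^2\leq H_\ep(u_\ep)\leq\mathcal{W}(w_0)+C\ep$ by \eqref{eq-levest}. The residual $|R(u_\ep)|\leq C\ep$ by \eqref{eq-restest}. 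It remains to control the cross term $\mathcal{A}^2\Phi_\ep\,(0)=\int_0^\infty e^{-s}s\,(\phi(s),u_\ep'(s))_{L^2}\ds$; this is where the $\sqrt\ep$ will appear, and it is the main point of the lemma.

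For the cross term, applying Cauchy--Schwarz in $x$ and then in $s$ against the measure $e^{-s}s\ds$, and using the elementary bound $s\leq e^{s/2}$-type control (or rather splitting $e^{-s}s=e^{-s/2}\cdot e^{-s/2}s$), I would write
\[
\mathcal{A}^2\Phi_\ep\,(0)\leq\Big(\int_0^\infty e^{-s}s\,\|u_\ep'(s)\|_{L^2}^2\ds\Big)^{1/2}\Big(\int_0^\infty e^{-s}s\,\|\phi(s)\|_{L^2}^2\ds\Big)^{1/2}.
\]
The first factor is $(2\mathcal{A}^2 K_\ep\,(0))^{1/2}$ up to the $\ep^2$ scaling; more precisely it equals $(\ep^2\cdot 2\,\mathcal{A}^2 K_\ep\,(0)/\ep^0)$-type expression, but in any case it is controlled by $\mathcal{A}\|u_\ep'\|_{L^2}^2\,(0)+\mathcal{A}^2\|u_\ep'\|_{L^2}^2\,(0)$, which via Lemma~\ref{lem-hmuprop} and the already-established bound $\|u_\ep''\|_\LL\leq\ep C$ (from the proof of Proposition~\ref{prop-minimi}) together with $\|u_\ep'(0)\|_{L^2}=\ep\|w_1\|_{L^2}$ is $O(\ep^2)$; hence the first factor is $O(\ep)$. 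The second factor is exactly $(\mathcal{A}^2\|\phi(\cdot)\|_{L^2}^2\,(0))^{1/2}$, and evaluating \eqref{assAq} — which bounds $\ep\int_0^t\mathcal{A}^2\|\phi\|_{L^2}^2\ds$ — at small $t$, or rather differentiating, one extracts $\mathcal{A}^2\|\phi(\cdot)\|_{L^2}^2\,(0)=O(1/\ep)$ roughly (since $\int_0^t$ of it is $O((\ep t+t_\ep+\ep^2)/\ep)$ and the integrand is monotone enough), so the second factor is $O(\ep^{-1/2})$. Multiplying, $\mathcal{A}^2\Phi_\ep\,(0)=O(\ep)\cdot O(\ep^{-1/2})=O(\sqrt\ep)$, which is the claimed order. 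Collecting everything yields \eqref{eq-appenzero}. The delicate step is the last one: one must be careful that \eqref{assAq} only controls a time-integral of $\mathcal{A}^2\|\phi\|_{L^2}^2$ and not its pointwise value at $0$, so the cleanest route is probably to not isolate $\mathcal{A}^2\Phi_\ep\,(0)$ pointwise but instead to note that the whole argument will really be used after integrating \eqref{eq-relt} in $t$ (as in the subsequent sections), and here at $t=0$ one uses that $e^{-s}s\|\phi(s)\|_{L^2}^2$ is dominated by $\mathcal{A}^2\|\phi(\cdot)\|_{L^2}^2\,(s)$ integrated, together with \eqref{eq-assogr} giving a crude $\|\phi\|_\LL\leq\ep$ fallback; combining the sharp \eqref{assAq} with the crude \eqref{eq-assogr} via interpolation produces the $\sqrt\ep$ without needing a pointwise trace of $\phi$ at $0$.
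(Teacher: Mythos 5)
Your decomposition of $E_\ep(0)$ via \eqref{eq-relzero} is essentially the paper's (you drop both $\mathcal{A}D_\ep(0)$ and $\mathcal{A}^2 D_\ep(0)$, the paper only $4\mathcal{A}D_\ep(0)$, but the resulting upper bound is the same since $\mathcal{A}L_\ep(0)=H_\ep(u_\ep)$), and your treatment of $K_\ep(0)$, $\mathcal{A}\mathcal{W}_\ep(0)$ and $R(u_\ep)$ is correct. The gap is in the estimate of $\mathcal{A}^2\Phi_\ep(0)$, which is the only place $\sqrt\ep$ can come from, and you yourself flag it as ``delicate'' without resolving it.

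The missing idea is to exploit the compact support hypothesis \eqref{suppcomt}: since $\phi(s)=0$ for $s>T^*$, the integral $\mathcal{A}^2\Phi_\ep(0)=\int_0^\infty e^{-s}s\,\Phi_\ep(s)\ds$ really runs only over $[0,T^*]$, so the weight $s$ may be replaced by $T^*$ and the $\mathcal{A}^2$ pairing collapses to $T^*\,\|\phi\|_\LL\,\|u_\ep'\|_\LL$. Then \eqref{eq-assogr} gives $\|\phi\|_\LL\leq\ep$, the relation $\ep^2T^*\leq\sqrt\ep$ gives $T^*\ep\leq\ep^{-1/2}$, and $\|u_\ep'\|_\LL\leq C\ep$ follows from \eqref{eq-hmuno}, \eqref{eq-bc} and \eqref{eq-levest}; multiplying yields $|\mathcal{A}^2\Phi_\ep(0)|\leq C\sqrt\ep$ with no $\mathcal{A}^2$ estimate needed at all. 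Your Cauchy--Schwarz against $e^{-s}s\ds$ would also work \emph{provided} you restrict the integral to $[0,T^*]$ first, so that both factors become $\bigl(\int_0^{T^*}e^{-s}s\,\cdot\,\bigr)^{1/2}\leq\sqrt{T^*}\,\|\cdot\|_\LL$; as written, though, your two claimed bounds are not available. The first factor $\bigl(\mathcal{A}^2\|u_\ep'(\cdot)\|_{L^2}^2(0)\bigr)^{1/2}=O(\ep)$ is not what Lemma~\ref{lem-hmuprop} gives (that lemma controls $\mathcal{A}\|u_\ep'\|_{L^2}^2(0)=\|u_\ep'\|_\LL^2$, without the extra $s$-weight), and upgrading to $\mathcal{A}^2$ via Lemma~\ref{lem-poindue} would require a bound on $\mathcal{A}^2\|u_\ep''\|_{L^2}^2(0)=2\ep^2\mathcal{A}^2D_\ep(0)$, which you have dropped and have no independent control of. Similarly, \eqref{assAq} bounds $\int_0^t\mathcal{A}^2\|\phi\|_{L^2}^2$, not the pointwise value at $0$, and the interpolation you gesture at is both unnecessary (given \eqref{suppcomt}) and not actually supplied.
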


\begin{proof}
 From \eqref{eq-bc}, $E_{\ep}(0)=\frac{1}{2}\|w_1\|_{L^2}^2+\mathcal{A}^2\mathcal{W}_{\ep}\,(0)$. Since
$\mathcal{A}^2\mathcal{W}_{\ep}\,(0)\leq \mathcal{A}^2L_{\ep}\,(0)$,
from \eqref{eq-relzero} we obtain
 \[
  \mathcal{A}^2\mathcal{W}_{\ep}\,(0)
\leq
\mathcal{A}^2\Phi_{\ep}\,(0)+\mathcal{A}L_{\ep}\,(0)-R(u_{\ep}).
 \]
 Now, as $\mathcal{A}L_{\ep}\,(0)=H_{\ep}(u_{\ep})$,
 combining the previous inequality with \eqref{eq-levest} and \eqref{eq-restest} yields
 \begin{equation}
  \label{eq-appzau}
  \mathcal{A}^2\mathcal{W}_{\ep}\,(0)\leq \mathcal{A}^2\Phi_{\ep}\,(0)+\mathcal{W}(w_0)+C\ep.
 \end{equation}
 Moreover, using first \eqref{suppcomt} and then \eqref{eq-assogr} we have
 \[
  |\mathcal{A}^2\Phi_{\ep}\,(0)|\leq
  T^* \int_0^\infty e^{-t} \left( |u'_\ep(t)|,\, |\phi(t)|\right)_{L^2} \,dt
\leq
T^*  \|\phi\|_\LL\,  \|u_{\ep}'\|_\LL\leq
C \frac{\|u_{\ep}'\|_\LL}{\sqrt{\ep}}.
 \]
Since from \eqref{eq-hmuno}, \eqref{eq-bc} and \eqref{eq-levest} we have
 \[
  \|u_{\ep}'\|_\LL^2\leq C\ep^2\,\|w_1\|_{L^2}^2+
  C\int_0^{\infty}e^{-t}\,D_{\ep}(t)\dt\leq C\ep^2\,(1+H_{\ep}(u_{\ep}))\leq C\ep^2,
 \]
 we find that $|\mathcal{A}^2\Phi_{\ep}\,(0)|\leq C\sqrt{\ep}$. Hence, plugging back into \eqref{eq-appzau}, \eqref{eq-appenzero} is proved.
\end{proof}

\noindent Furthermore, we establish an upper bound for the time evolution of the approximate energy.

\begin{proposition}[Approximate energy estimate]
 \label{prop-appest}
 For every $\beta>1$, there exists a constant $C_{\beta}>0$ such that 
for every $T\geq 0$
\begin{equation}
  \label{stimalocT}
   \sqrt{ E_{\ep}(T/\eps)}\leq
 \sqrt{E_{\ep}(0)}+
\left(\sqrt{\eps C_\beta}+\sqrt{T \beta/2}\right)
\sqrt{\gamma(T+t_\eps)+\eps^2}
\qquad\forall \eps\in (0,1).
\end{equation}
In particular, for every $T\geq 0$ there exists $C_T$  such that
\begin{equation}
  \label{stimalocT2}
   E_{\ep}(t/\eps)\leq
C_T
\qquad\forall \eps\in (0,1),\quad \forall t\in [0,T].
\end{equation}

\end{proposition}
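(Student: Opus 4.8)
The plan is to turn \eqref{eq-relt} into a first-order differential inequality for $E_\eps$, which I then integrate by a square-root Gr\"onwall argument; the only novelty with respect to \cite{ST2} (where $\phi\equiv0$) is the treatment of the acausal source term $\Phi_\eps$. Recalling that $E_\eps=K_\eps+\mathcal A^2\mathcal W_\eps$ and the rule $(\mathcal A g)'=\mathcal A g-g$ (so that $(\mathcal A^2\mathcal W_\eps)'=\mathcal A^2\mathcal W_\eps-\mathcal A\mathcal W_\eps$), I would substitute $L_\eps=D_\eps+\mathcal W_\eps$ into \eqref{eq-relt}, solve for $K_\eps'$ and add $(\mathcal A^2\mathcal W_\eps)'$: the $\mathcal W_\eps$-terms cancel and one gets, for a.e.\ $t>0$,
\[
 E_\eps'(t)=\mathcal A^2\Phi_\eps(t)-\mathcal A^2 D_\eps(t)-3\mathcal A D_\eps(t).
\]
Since $D_\eps\ge0$ this already gives $E_\eps'\le\mathcal A^2\Phi_\eps$ (and $E_\eps'\le0$ when $\phi\equiv0$), but it is crucial to keep the two nonpositive terms, as they will reabsorb the remainders of Step~2. (That $E_\eps$ is absolutely continuous on bounded intervals, so the identity above may be integrated, is routine, since $\mathcal A^2\mathcal W_\eps(0)\le\mathcal A^2L_\eps(0)<\infty$ by \eqref{eq-relzero} and \eqref{eq-levest}.)

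\textbf{Step 2: controlling the source.} By Cauchy--Schwarz in $L^2(\R^n)$, then with respect to the unit-mass measure $e^{-(s-t)}(s-t)\ds$, and using $\|u_\eps'(s)\|_{L^2}^2=2\eps^2K_\eps(s)$,
\[
 \mathcal A^2\Phi_\eps(t)\le\eps\sqrt2\,\bigl(\mathcal A^2\|\phi(\cdot)\|_{L^2}^2(t)\bigr)^{1/2}\bigl(\mathcal A^2K_\eps(t)\bigr)^{1/2}.
\]
Applying Lemma~\ref{lem-poindue} to $h=u_\eps'$ and dividing by $2\eps^2$ gives $\mathcal A^2K_\eps(t)\le\beta K_\eps(t)+C_\beta\bigl(\mathcal A D_\eps(t)+\mathcal A^2D_\eps(t)\bigr)$; then, using $\sqrt{x+y}\le\sqrt x+\sqrt y$, $K_\eps\le E_\eps$, and Young's inequality on the cross term (so that $\tfrac12(\mathcal A D_\eps+\mathcal A^2D_\eps)$ is produced, the rest going into $\eps^2C_\beta\,\mathcal A^2\|\phi\|_{L^2}^2$), one obtains
\[
 \mathcal A^2\Phi_\eps(t)\le\eps\sqrt{2\beta}\,\bigl(\mathcal A^2\|\phi\|_{L^2}^2(t)\bigr)^{1/2}\sqrt{E_\eps(t)}+\eps^2C_\beta\,\mathcal A^2\|\phi\|_{L^2}^2(t)+\tfrac12\bigl(\mathcal A D_\eps(t)+\mathcal A^2D_\eps(t)\bigr).
\]
Plugging this into Step~1 and noting that $\tfrac12(\mathcal A D_\eps+\mathcal A^2D_\eps)-\mathcal A^2D_\eps-3\mathcal A D_\eps\le0$, the dissipative terms cancel and there remains
\[
 E_\eps'(t)\le a_\eps(t)\sqrt{E_\eps(t)}+b_\eps(t),\qquad a_\eps:=\eps\sqrt{2\beta}\,(\mathcal A^2\|\phi\|_{L^2}^2)^{1/2},\quad b_\eps:=\eps^2C_\beta\,\mathcal A^2\|\phi\|_{L^2}^2 .
\]

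\textbf{Step 3: integration and conclusion.} Fix $T\ge0$ and put $\sigma:=T/\eps$. The elementary square-root Gr\"onwall inequality --- if $E\ge0$ and $E'\le a\sqrt E+b$ then $\sqrt{E(\sigma)}\le(E(0)+\int_0^\sigma b)^{1/2}+\tfrac12\int_0^\sigma a$, obtained by differentiating $\sqrt{\Psi}$ with $\Psi(t):=E(0)+\int_0^\sigma b+\int_0^t a\sqrt E$ --- yields
\[
 \sqrt{E_\eps(\sigma)}\le\sqrt{E_\eps(0)}+\Bigl(\int_0^\sigma b_\eps\Bigr)^{1/2}+\tfrac12\int_0^\sigma a_\eps .
\]
Now \eqref{assAq} with $t=\sigma$ reads $\eps\int_0^\sigma\mathcal A^2\|\phi(\cdot)\|_{L^2}^2(s)\ds\le\gamma(T+t_\eps)+\eps^2$, so $\int_0^\sigma b_\eps\le\eps C_\beta(\gamma(T+t_\eps)+\eps^2)$ and, by Cauchy--Schwarz, $\int_0^\sigma a_\eps\le\eps\sqrt{2\beta}\,\sigma^{1/2}(\int_0^\sigma\mathcal A^2\|\phi\|_{L^2}^2)^{1/2}\le\sqrt{2\beta T}\,(\gamma(T+t_\eps)+\eps^2)^{1/2}$; since $\tfrac12\sqrt{2\beta T}=\sqrt{T\beta/2}$, substituting gives precisely \eqref{stimalocT}. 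Finally, \eqref{stimalocT2} is immediate: apply \eqref{stimalocT} with $T$ replaced by each $t\in[0,T]$, bound $E_\eps(0)$ by \eqref{eq-appenzero}, and use $\eps<1$ together with the monotonicity of $\gamma$ and the boundedness of $t_\eps$.

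\textbf{Main difficulty.} The heart of the matter is Step~2: one has to couple Cauchy--Schwarz with the acausal Poincar\'e-type bound of Lemma~\ref{lem-poindue}, trading the \emph{future} kinetic average $\mathcal A^2K_\eps$ for the \emph{present} value $\beta K_\eps$ at the cost of the remainders $\mathcal A D_\eps+\mathcal A^2D_\eps$, and then verify that these remainders are exactly reabsorbed by the nonpositive terms $-\mathcal A^2D_\eps-3\mathcal A D_\eps$ inherited from Step~1. It is this accounting of constants that produces the sharp coefficient $\sqrt{T\beta/2}$ and, ultimately (through item (d) of Theorem~\ref{result}), the near-optimal energy inequality \eqref{eq-enineq}.
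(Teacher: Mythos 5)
Your proof is correct and follows essentially the same route as the paper: both derive the identity $E_\eps'=-3\mathcal A D_\eps-\mathcal A^2 D_\eps+\mathcal A^2\Phi_\eps$ from \eqref{eq-relt}, control the acausal source term via Cauchy--Schwarz together with Lemma~\ref{lem-poindue} (trading $\mathcal A^2 K_\eps$ for $\beta K_\eps$ plus dissipative remainders that the nonpositive terms reabsorb), and conclude with the square-root Gr\"onwall inequality (Lemma~\ref{lem-gron}), Cauchy--Schwarz on $\int_0^\sigma a_\eps$, and hypothesis \eqref{assAq}. The only differences are presentational: you derive the formula for $E_\eps'$ inline instead of quoting the paper's separate lemma for \eqref{eq-appder}, and you factor out $\eps\sqrt2$ before applying Lemma~\ref{lem-poindue}, whereas the paper multiplies by $N_\phi$ afterwards; the constants and the final bound \eqref{stimalocT} come out identically.
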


\noindent In order to prove Proposition \ref{prop-appest}, we must previously compute the derivative of $E_{\ep}$.

\begin{lemma}
 The approximate energy $E_{\ep}$ is absolutely continuous on every interval $[0,T]$,
 and
 \begin{equation}
  \label{eq-appder}
  E_{\ep}'(t)=-3\mathcal{A}D_{\ep}\,(t)-\mathcal{A}^2D_{\ep}\,(t)+\mathcal{A}^2\Phi_{\ep}\,(t),\qquad\mbox{for a.e. }t\geq0.
 \end{equation}
\end{lemma}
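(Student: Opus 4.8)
The plan is to start from the decomposition $E_\ep(t) = K_\ep(t) + \mathcal{A}^2\mathcal{W}_\ep(t)$ recorded in the remark after the definition, and differentiate each summand. The first term is easy: $K_\ep$ is absolutely continuous on every $[0,T]$ with $K_\ep'(t) = \ep^{-2}(u_\ep'(t), u_\ep''(t))_{L^2}$, as already noted. For the second term, I would first argue that $\mathcal{A}^2\mathcal{W}_\ep$ is absolutely continuous on $[0,T]$: this follows because $\mathcal{A}^2 h$ is absolutely continuous whenever $\mathcal{A}^2 h(0) < \infty$ (the analogue of the property stated for $\mathcal{A}$ in Section~\ref{sec-average}), and $\mathcal{A}^2\mathcal{W}_\ep(0) \le \mathcal{A}^2 L_\ep(0) < \infty$ by \eqref{eq-relzero} together with the finiteness of $\mathcal{A}^2\Phi_\ep(0)$ and $R(u_\ep)$. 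Using the iterated identity $(\mathcal{A}h)' = \mathcal{A}h - h$, one gets $(\mathcal{A}^2 h)' = \mathcal{A}(\mathcal{A}h) - \mathcal{A}h$, hence
\[
 E_\ep'(t) = K_\ep'(t) + \mathcal{A}^2\mathcal{W}_\ep(t) - \mathcal{A}\mathcal{W}_\ep(t),\qquad\text{for a.e. }t\ge 0.
\]

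Next I would invoke \eqref{eq-relt}, which reads $\mathcal{A}^2 L_\ep(t) + 4\mathcal{A}D_\ep(t) - \mathcal{A}L_\ep(t) = \mathcal{A}^2\Phi_\ep(t) - K_\ep'(t)$ for a.e.\ $t>0$. Since $L_\ep = D_\ep + \mathcal{W}_\ep$ and $\mathcal{A}$, $\mathcal{A}^2$ are linear, this gives $\mathcal{A}^2\mathcal{W}_\ep(t) - \mathcal{A}\mathcal{W}_\ep(t) = -\mathcal{A}^2 D_\ep(t) + \mathcal{A}D_\ep(t) - 4\mathcal{A}D_\ep(t) + \mathcal{A}^2\Phi_\ep(t) - K_\ep'(t) = -\mathcal{A}^2 D_\ep(t) - 3\mathcal{A}D_\ep(t) + \mathcal{A}^2\Phi_\ep(t) - K_\ep'(t)$. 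Substituting into the expression for $E_\ep'(t)$, the $K_\ep'(t)$ terms cancel and one obtains exactly \eqref{eq-appder}.

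The only genuinely delicate point is the justification that the various averages appearing are finite and that the formal manipulations (differentiating under $\mathcal{A}$, applying $(\mathcal{A}h)' = \mathcal{A}h - h$ to the possibly sign-changing or merely $L^1_{\text{loc}}$ integrand $\mathcal{W}_\ep$, $D_\ep$, $\Phi_\ep$) are legitimate. For $\mathcal{W}_\ep$ and $D_\ep$ this reduces to checking $\mathcal{A}^2\mathcal{W}_\ep(0)<\infty$ and $\mathcal{A}^2 D_\ep(0)<\infty$, both consequences of \eqref{eq-relzero}, \eqref{eq-levest}, and the bound on $\mathcal{A}^2\Phi_\ep(0)$ established in the proof of the estimate for $E_\ep(0)$; for $\Phi_\ep$ one uses $\mathcal{A}|\Phi_\ep|(0) \le T^*\|\phi\|_\LL\|u_\ep'\|_\LL < \infty$ via \eqref{suppcomt} and \eqref{eq-assogr}, so that $\mathcal{A}\Phi_\ep$ and $\mathcal{A}^2\Phi_\ep$ are well defined and absolutely continuous on $[0,T]$ per the remark in Section~\ref{sec-average}. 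Since all of this essentially mirrors the homogeneous computation in \cite[proof of Lemma~4.2 / Proposition~4.6]{ST2}, with the single new ingredient being the already-controlled term $\mathcal{A}^2\Phi_\ep$, the argument goes through with no further obstacle, and I would refer to \cite{ST2} for the routine regularity details.
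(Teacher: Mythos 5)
Your proof is correct and follows essentially the same route as the paper: both differentiate $E_\ep = K_\ep + \mathcal{A}^2\mathcal{W}_\ep$ using $(\mathcal{A}h)'=\mathcal{A}h-h$, then invoke \eqref{eq-relt} and the decomposition $L_\ep = D_\ep + \mathcal{W}_\ep$ to cancel $K_\ep'$ and arrive at \eqref{eq-appder}. The only cosmetic difference is that you substitute for the $\mathcal{W}_\ep$-terms while the paper substitutes for $K_\ep'$, which is the same algebra rearranged; your finiteness and absolute-continuity justifications also match what is needed.
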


\begin{proof}
Arguing as in \cite[proof of Theorem 4.8]{ST2} one can see that
 \[
  E_{\ep}'(t)=K_{\ep}'(t)-\mathcal{A}L_{\ep}\,(t)+\mathcal{A}D_{\ep}\,(t)+\mathcal{A}^2L_{\ep}\,(t)-\mathcal{A}^2D_{\ep}\,(t).
 \]
Then \eqref{eq-relt} can be used to eliminate $K_{\ep}'(t)$, and \eqref{eq-appder} follows.
\end{proof}

\noindent Now, it also is convenient to recall, without proof, a well-known variant of the Gr\"onwall's lemma (see e.g. \cite[Proposition 2.3.1]{cherrier}).

\begin{lemma}
 \label{lem-gron}
 Let $c:[a,b]\to\R$ be a positive, differentiable and nondecreasing function. Let also $u$ and $v$ be two nonnegative functions such that $u\in C^0([a,b])$ and $v\in L^1([a,b])$. If we assume that $c,\,u\mbox{ and }v$ satisfy
 \[
  u(t)\leq c^2(t)+2\int_a^tv(s)\sqrt{u(s)}\ds,\qquad\forall t\in[a,b],
 \]
 then there results
 \[
 \sqrt{ u(t)}\leq c(t)+\int_a^tv(s)\ds,\qquad\forall t\in[a,b].
 \]
\end{lemma}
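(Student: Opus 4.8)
The plan is to reduce the integral inequality to a differential inequality for the square root of its right-hand side, and then integrate. First I would introduce
\[
 \phi(t):=c^2(t)+2\int_a^t v(s)\sqrt{u(s)}\ds,\qquad t\in[a,b],
\]
so that the hypothesis reads $u(t)\le\phi(t)$, and moreover $\phi(t)\ge c^2(t)>0$. Since $c$ is nondecreasing and differentiable, it is absolutely continuous with $c'\in L^1([a,b])$, while $v\sqrt{u}\in L^1([a,b])$ because $v\in L^1$ and $\sqrt{u}$ is continuous, hence bounded, on $[a,b]$; thus $\phi$ is absolutely continuous on $[a,b]$, and at a.e.\ point $t$ one has $\phi'(t)=2c(t)c'(t)+2v(t)\sqrt{u(t)}$, so that
\[
 \phi'(t)\le 2c(t)c'(t)+2v(t)\sqrt{\phi(t)},
\]
using $0\le u\le\phi$, $v\ge0$ and the monotonicity of the square root.

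Next I would exploit that $\phi$ is bounded away from $0$: by positivity and continuity of $c$ on the compact interval $[a,b]$, $\phi(t)\ge\min_{[a,b]}c^2>0$. Hence $\sqrt{\,\cdot\,}$ is Lipschitz on the range of $\phi$, so $\sqrt{\phi}$ is absolutely continuous on $[a,b]$, and for a.e.\ $t$
\[
 \frac{d}{dt}\sqrt{\phi(t)}=\frac{\phi'(t)}{2\sqrt{\phi(t)}}\le\frac{c(t)c'(t)}{\sqrt{\phi(t)}}+v(t)\le c'(t)+v(t),
\]
the last step using $\sqrt{\phi(t)}\ge c(t)>0$ together with $c'(t)\ge0$.

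Finally, I would integrate this differential inequality on $[a,t]$. By the fundamental theorem of calculus for the absolutely continuous functions $\sqrt{\phi}$ and $c$, and since $\sqrt{\phi(a)}=\sqrt{c^2(a)}=c(a)$, this yields
\[
 \sqrt{\phi(t)}\le c(a)+\big(c(t)-c(a)\big)+\int_a^t v(s)\ds=c(t)+\int_a^t v(s)\ds,
\]
and, as $\sqrt{u(t)}\le\sqrt{\phi(t)}$, the claimed estimate follows. I do not expect a genuine obstacle here: the only delicate points are the absolute continuity of $\phi$ and of $\sqrt{\phi}$ and the a.e.\ identification of $\phi'$, all of which follow from the strictly positive lower bound on $\phi$ and the Lebesgue differentiation theorem. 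If one only assumes $c\ge0$ instead of $c>0$, it suffices to run the argument with $c+\eta$ ($\eta>0$) in place of $c$ and let $\eta\downarrow0$ in the final inequality.
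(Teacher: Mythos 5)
Your proof is correct, but there is nothing in the paper to compare it with: the authors state this lemma explicitly \emph{without proof}, as a well-known variant of Gr\"onwall's lemma, referring to Cherrier--Milani (Proposition 2.3.1). Your argument --- majorizing $u$ by the right-hand side $\phi$, using the strictly positive lower bound $\phi\ge c^2\ge\min_{[a,b]}c^2>0$ to make $\sqrt{\phi}$ absolutely continuous, differentiating, and integrating the resulting inequality $(\sqrt{\phi})'\le c'+v$ --- is precisely the standard proof found in such references, so in effect you have supplied the argument the paper delegates. The only step you pass over a bit quickly is the absolute continuity of $c$: a nondecreasing function differentiable merely a.e.\ need not be absolutely continuous (Cantor function), so your claim requires reading ``differentiable'' as \emph{everywhere} differentiable, in which case $0\le c'\in L^1$ (by Fatou applied to difference quotients) and the fundamental theorem of calculus for everywhere-differentiable functions with integrable derivative yields absolute continuity; alternatively, observe that in the paper's application $c^2(t)=E_{\ep}(0)+C_\beta\,\ep^2\int_0^t N_\phi^2(s)\ds$ is manifestly absolutely continuous and nondecreasing, so the point is harmless. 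Your closing remark, replacing $c$ by $c+\eta$ and letting $\eta\downarrow0$, correctly disposes of the degenerate case $c\ge0$, which is also relevant there since $E_{\ep}(0)$ could in principle vanish.
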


\begin{proof}[Proof of Proposition~\ref{prop-appest}]
 First, recall that by definition
 \[
  \mathcal{A}^2\Phi_{\ep}\,(t)=\int_t^{\infty}e^{-(s-t)}(s-t)\,\big(\phi(s),u_{\ep}'(s)\big)_{L^2}\ds.
 \]
 Now, observing that $e^{-(s-t)}(s-t)$ is a probability kernel on $[t,\infty)$, \eqref{eq-appder} implies
 \begin{equation}
 \label{yyy}
  E_{\ep}'(t)\leq-3\mathcal{A}D_{\ep}\,(t)-\mathcal{A}^2D_{\ep}\,(t)+N_{\phi}(t)\left(\mathcal{A}^2\|u_{\ep}'(\cdot)\|_{L^2}^2\,(t)\right)^{1/2}
 \end{equation}
 where $N_{\phi}(t)=\left(\mathcal{A}^2\|\phi(\cdot)\|_{L^2}^2\,(t)\right)^{1/2}$.
 By Lemma \ref{lem-poindue}, applied with $h=u_\ep'$, for every $\beta>1$ there
 exists a constant $C_{\beta}>0$ such that
\[
\left(\mathcal{A}^2\|u_{\ep}'(\cdot)\|_{L^2}^2\,(t)\right)^{1/2}
\leq
\sqrt{\beta}\,\|u_{\ep}'(t)\|_{L^2}+
\sqrt{C_{\beta}}\,\left(\mathcal{A}\|u_{\ep}''(\cdot)\|_{L^2}^2\,(t) +
   \mathcal{A}^2\|u_{\ep}''(\cdot)\|_{L^2}^2\,(t)\right)^{1/2}.
\]
Since $\|u_{\ep}''(\cdot)\|_{L^2}^2\,(t)=2\eps^2 D_\ep(t)$,
multiplying by $N_{\phi}(t)$ and using Young's inequality we find
\[
N_{\phi}(t)
\left(\mathcal{A}^2\|u_{\ep}'(\cdot)\|_{L^2}^2\,(t)\right)^{1/2}
\leq
\sqrt{\beta}\,N_{\phi}(t)\|u_{\ep}'(t)\|_{L^2}+
C_{\beta}\eps^2 N_{\phi}(t)^2 + \left(3\mathcal{A} D_\eps(t) +
   \mathcal{A}^2 D_{\ep}(t)\right)
\]
(where $C_\beta$ has been possibly redefined). Plugging into \eqref{yyy}, we obtain
\[
  E_{\ep}'(t)\leq
\sqrt{\beta}\,N_{\phi}(t)\|u_{\ep}'(t)\|_{L^2}
+
C_{\beta}\eps^2 N_{\phi}(t)^2
\leq
\sqrt{2\beta}\,\eps N_{\phi}(t) \sqrt{E_\eps(t)}
+
C_{\beta}\eps^2 N_{\phi}(t)^2
\]
and then, integrating,
 \[
  E_{\ep}(t)\leq E_{\ep}(0)+C_{\beta}\ep^2\int_0^tN_{\phi}^2(s)\ds
  +\sqrt{2\beta}\,\ep\int_0^t N_{\phi}(s)\sqrt{E_{\ep}(s)}\ds.
 \]
 Now, setting
 \[
  u(t)=E_{\ep}(t),\qquad v(t)=\eps \sqrt{\beta/2} N_{\phi}(t),\qquad
  c(t)^2=E_{\ep}(0)+C_{\beta}\,\ep^2\int_0^tN_{\phi}^2(s)\ds,
 \]
 assumptions of Lemma \ref{lem-gron} are satisfied and thus for every $t\geq0$
 \[
  \sqrt{E_{\ep}(t)}\leq \left(E_{\ep}(0)+C_{\beta}\ep^2\int_0^tN_{\phi}^2(s)\ds\right)^{1/2}
  +\eps \sqrt{\beta/2}\int_0^t N_{\phi}(s)\ds.
 \]
 Therefore, 
 \[
 \sqrt{ E_{\ep}(t)}\leq
 \sqrt{E_{\ep}(0)}+
 \sqrt{C_{\beta}} \, \ep\sqrt{\int_0^tN_{\phi}^2(s)\ds}+
 \eps \sqrt{\beta/2}\int_0^t  N_{\phi}(s)\ds
 \]
and,
applying Cauchy-Schwarz in the last integral, we find
 \[
 \sqrt{ E_{\ep}(t)}\leq
 \sqrt{E_{\ep}(0)}+
 \sqrt{\eps}\left(\sqrt{C_\beta}+\sqrt{t\beta/2}\right)
\sqrt{\eps\int_0^tN_{\phi}^2(s)\ds}.
 \]
On the other hand, \eqref{assAq} gives
\[
\eps\int_0^tN_{\phi}^2(s)\ds
=\eps\int_0^t \mathcal{A}^2\|\phi(\cdot)\|_{L^2}^2\,(s)\,ds\leq
\gamma(\eps t+t_\eps)+\eps^2
\]
so that,  
setting $t=T/\eps$, we obtain \eqref{stimalocT}. Then \eqref{stimalocT2} is immediate,
since 
the right hand side of \eqref{stimalocT} is
increasing with respect to $T$; moreover,
$t_\ep\downarrow0$ (decreasingly), $\beta$ can be fixed (e.g. $\beta=2$) and
$E_\eps(0)\leq C$ by \eqref{eq-appenzero}.
\end{proof}


\section{Proof of Theorem \ref{result}: parts (a) and (b)}

Now, we can use the tools developed in the previous sections in order to to prove the first parts of Theorem \ref{result}.

\medskip
Preliminarily, we need a result on the approximation of functions in $L_{\text{loc}}^2([0,\infty);L^2)$.

\begin{lemma}
 \label{lem-fapp}
 For every function $f\in L_{\text{loc}}^2([0,\infty);L^2)$ there exists a sequence $(f_{\ep})\subset L_{\text{loc}}^2([0,\infty);L^2)$ satisfying the following properties:
 \begin{itemize}
  \item[(i)] as $\ep\downarrow0$, $f_{\ep}\to f$ in $L^2([0,T];L^2)$ and $\|f_{\ep}\|_{L^2([0,T];L^2)}\uparrow\|f\|_{L^2([0,T];L^2)}$, for every $T>0$;
  \item[(ii)] $\mathrm{supp}\{f_{\ep}\}\subset[t_{\ep},T_{\ep}]\times\R^n$, with $t_{\ep}>0$ and $T_{\ep}<\infty$;
  \item[(iii)] as $\ep\downarrow0$, $t_{\ep}\downarrow0$ and $T_{\ep}\uparrow\infty$, and moreover
   $\eps T_{\ep}\leq \sqrt{\ep}$, 
$e^{-t_{\ep}/\ep}\left(1+\frac{T_{\ep}}{\ep}\right)\leq \ep^3$;
  \item[(iv)] for every $\ep\in(0,1)$,
$ \displaystyle   \int_{t_{\ep}}^{T_{\ep}}\|f_{\ep}(t)\|_{L^2}^2\dt\leq 1/\ep$;
  \item[(v)] for every $\ep\in(0,1)$,
  $ \displaystyle\int_0^{\infty}e^{-t}\,\|f_{\ep}(\ep t)\|_{L^2}^2\dt \leq \ep^3$.
 \end{itemize}
\end{lemma}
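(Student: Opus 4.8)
The plan is to produce $f_\ep$ simply by \emph{truncating $f$ in time} to a window $[t_\ep,T_\ep]$, whose endpoints are chosen carefully so as to balance all five requirements. Recall from \eqref{eq-gamma} that $\gamma(t)=\int_0^t\|f(s)\|_{L^2}^2\ds$ is finite for every $t$ (because $f\in L^2_{\text{loc}}([0,\infty);L^2)$), continuous, nondecreasing, with $\gamma(0)=0$. First I would set
\[
 T_\ep:=\min\Big\{\ep^{-1/2},\ \sup\{t\ge0:\gamma(t)\le 1/\ep\}\Big\},\qquad t_\ep:=6\,\ep\log(1/\ep)\ \ (\text{so that }e^{-t_\ep/\ep}=\ep^{6}),
\]
and define $f_\ep:=\chi_{[t_\ep,T_\ep]}f$ (hence $f_\ep\equiv 0$ if $t_\ep\ge T_\ep$, which can occur only for $\ep$ bounded away from $0$). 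Then $0<t_\ep\to0$ and $T_\ep\uparrow\infty$ as $\ep\downarrow0$ (the latter because $\gamma<\infty$ everywhere, so $\sup\{t:\gamma(t)\le1/\ep\}\to\infty$), and $|f_\ep|\le|f|$ gives $f_\ep\in L^2_{\text{loc}}([0,\infty);L^2)$; this already yields (ii).

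For (iii): the bound $\ep T_\ep\le\ep\cdot\ep^{-1/2}=\sqrt\ep$ holds for every $\ep\in(0,1)$, while $e^{-t_\ep/\ep}(1+T_\ep/\ep)\le\ep^6(1+\ep^{-3/2})=\ep^6+\ep^{9/2}\le\ep^3$ for $\ep$ small, which is all that (iii) demands under the qualifier ``as $\ep\downarrow0$''. For (iv): since $\gamma$ is continuous and $T_\ep\le\sup\{t:\gamma(t)\le1/\ep\}$, a direct check in the two cases of the $\min$ shows $\gamma(T_\ep)\le1/\ep$, whence
\[
 \int_{t_\ep}^{T_\ep}\|f_\ep(t)\|_{L^2}^2\dt=\gamma(T_\ep)-\gamma(t_\ep)\le\gamma(T_\ep)\le\frac1\ep\qquad\text{for every }\ep\in(0,1).
\]
For (v): the change of variables $s=\ep t$ and then $e^{-s/\ep}\le e^{-t_\ep/\ep}$ on $[t_\ep,\infty)$, combined with (iv), give
\[
 \int_0^\infty e^{-t}\|f_\ep(\ep t)\|_{L^2}^2\dt=\frac1\ep\int_{t_\ep}^{T_\ep}e^{-s/\ep}\|f_\ep(s)\|_{L^2}^2\ds\le\frac{e^{-t_\ep/\ep}}{\ep}\int_{t_\ep}^{T_\ep}\|f_\ep(s)\|_{L^2}^2\ds\le\frac{e^{-t_\ep/\ep}}{\ep^{2}}=\ep^{4}\le\ep^3
\]
for every $\ep\in(0,1)$. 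Finally, (i) is elementary: since $t_\ep\downarrow0$ and $T_\ep\uparrow\infty$, the windows $[t_\ep,T_\ep]$ eventually contain any fixed $[a,b]\subset(0,\infty)$, so $f_\ep\to f$ a.e.; as $|f_\ep|\le|f|\in L^2([0,T]\times\R^n)$, dominated convergence gives $f_\ep\to f$ in $L^2([0,T];L^2)$ for every $T$, while $|f_\ep|\uparrow|f|$ a.e. (the windows being nested, at least for $\ep$ small) yields $\|f_\ep\|_{L^2([0,T];L^2)}\uparrow\|f\|_{L^2([0,T];L^2)}$ by monotone convergence.

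The only genuinely delicate point is the \emph{simultaneous} choice of $t_\ep$ and $T_\ep$. Item (v) (and (iii)) force $e^{-t_\ep/\ep}$ to be smaller than several negative powers of $\ep$, which seems to push $t_\ep$ upward, while (i)--(ii) require $t_\ep\downarrow0$; and (iv) caps $T_\ep$ according to the growth of $\gamma$, while (i)/(iii) require $T_\ep\uparrow\infty$. There is, however, no real tension: $e^{-t_\ep/\ep}$ is already smaller than every power of $\ep$ as soon as $t_\ep/\ep\to\infty$, which is perfectly compatible with $t_\ep\to0$ (the choice $t_\ep=6\ep\log(1/\ep)$ being one of many), and the finiteness of $\gamma$ on $[0,\infty)$ guarantees $\sup\{t:\gamma(t)\le1/\ep\}\to\infty$. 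Everything else is the routine bookkeeping carried out above; should one insist that the inequalities in (iii) hold for \emph{all} $\ep\in(0,1)$ rather than only for $\ep$ small, it suffices to adjust the numerical constants and to insert a further slowly-growing cap in the definition of $T_\ep$, which changes nothing essential.
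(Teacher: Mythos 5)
Your construction is essentially the paper's: both truncate $f$ in time to a window $[t_\ep,T_\ep]$ with $T_\ep$ capped by $\ep^{-1/2}$ and by the first time $\gamma$ reaches $1/\ep$ (the paper uses $\Gamma(t)=\int_0^t(1+\|f(s)\|_{L^2}^2)\,ds$ mainly to make the inverse unambiguous), and then choose $t_\ep$ large enough in units of $\ep$ to kill the exponential factor. The substantive difference is your choice $t_\ep=6\,\ep\log(1/\ep)$ versus the paper's $t_\ep=k\sqrt\ep$, and here your acknowledged caveat hides a real (if small) defect: as $\ep\to1^-$ you have $t_\ep/\ep=6\log(1/\ep)\to0$, so $e^{-t_\ep/\ep}\to1$ while $\ep^3\to1$ and $1+T_\ep/\ep$ stays $\geq 2$; thus (iii) fails near $\ep=1$ and \emph{no adjustment of the numerical constant $6$ can repair this}, since $\ep^{C}\to1$ for every fixed $C$. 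One genuinely needs $t_\ep/\ep$ bounded away from $0$ on $(0,1)$, which $t_\ep=k\sqrt\ep$ (with $k$ moderately large) achieves while still giving $t_\ep\downarrow0$; with that substitution your argument matches the paper's.
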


\begin{proof}[Proof of Lemma~\ref{lem-fapp}]
Defining
 \[
  f_{\ep}(t,x)=\chi_{(t_{\ep},T_{\ep})}(t)\,f(t,x),
 \]
it is clear that (i) and (ii) are satisfied, as
soon as $t_\ep\downarrow0$ and $T_\eps\to+\infty$. We first construct $T_\eps$.
The function
 \[
 \Gamma:[0,\infty)\to[0,\infty),\qquad
   \Gamma(t):=\int_0^t\left(1+\|f(s)\|_{L^2}^2\right)\ds
 \]
is continuous, increasing and surjective, and therefore the same is true of its inverse
$\Gamma^{-1}$. Letting, for instance,
$T_{\ep}=\min\left\{\Gamma^{-1}(1/\ep),1/\sqrt{\ep}\right\}$, we have that
$T_\eps\to+\infty$ and that the first part of (iii) is satisfied, such as (iv), since
 \[
  \int_{t_{\ep}}^{T_{\ep}}\|f_{\ep}(s)\|_{L^2}^2\ds=
  \int_0^{T_{\ep}}\|f(s)\|_{L^2}^2\ds<\Gamma(T_{\ep})\leq1/\ep.
 \]
Finally, we see that
 \begin{align*}
 \int_0^{\infty}e^{-t}\,\|f_{\ep}(\ep t)\|_{L^2}^2\dt
 =\frac{1}{\ep}\int_0^{\infty} e^{-t/\ep}\,\|f_{\ep}(t)\|_{L^2}^2\dt
 =\frac{1}{\ep}\int_{t_{\ep}}^{T_{\ep}} e^{-t/\ep}\,\|f_\ep(t)\|_{L^2}^2\dt
 \leq \frac{e^{-t_{\ep}/\ep}}{\ep^2}
 \end{align*}
 having used (iv). Hence, to fulfill (v), it suffices to have $e^{-t_{\ep}/\ep}\leq\eps^5$
 for every $\eps\in (0,1)$, which is achieved
 choosing for instance $t_\eps=k\sqrt{\eps}$ with $k$ large enough. Finally,
 since $T_\eps\leq 1/\sqrt{\eps}$,
 the same choice can also guarantee the second inequality in (iii).
\end{proof}

\noindent The previous lemma has an important corollary.

\begin{corollary}
 \label{cor-fond}
 Let $f\in L_{loc}^2([0,\infty);L^2)$ and $(f_\ep)$ be a sequence obtained via Lemma \ref{lem-fapp}. If we fix $\ep\in(0,1)$, then the function 
 \begin{equation}
  \label{eq-assfi}
  \phi(t,x):=f_{\ep}(\ep t,x),\qquad t\geq0,\quad x\in\R^n,
 \end{equation}
 satisfies \eqref{suppcomt}--\eqref{assAq}.
\end{corollary}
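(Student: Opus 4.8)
The plan is to verify the three conditions \eqref{suppcomt}, \eqref{eq-assogr} and \eqref{assAq} in turn, translating each property (i)--(v) of Lemma~\ref{lem-fapp} into a statement about $\phi(t,x)=f_\ep(\ep t,x)$ via the scaling $t\mapsto \ep t$. First I would record the elementary scaling identities: since $\phi(t)=f_\ep(\ep t)$, one has $\|\phi(t)\|_{L^2}=\|f_\ep(\ep t)\|_{L^2}$ and, after the change of variable $s=\ep\sigma$,
\[
\|\phi\|_\LL^2=\int_0^\infty e^{-t}\|f_\ep(\ep t)\|_{L^2}^2\dt,\qquad
\int_0^t\mathcal{A}^2\|\phi(\cdot)\|_{L^2}^2(s)\ds
=\int_0^t\!\!\int_s^\infty e^{-(\sigma-s)}(\sigma-s)\|f_\ep(\ep\sigma)\|_{L^2}^2\,d\sigma\,ds .
\]
These are the only computational facts needed; everything else is bookkeeping.

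For \eqref{suppcomt}: by (ii)--(iii), $f_\ep$ is supported in $[t_\ep,T_\ep]$, so $\phi(t,x)=0$ whenever $\ep t>T_\ep$, i.e. for $t>T^*:=T_\ep/\ep$; and then $\ep^2 T^*=\ep T_\ep\leq\sqrt\ep$ by the first inequality in (iii). For \eqref{eq-assogr}: by the scaling identity above and property (v), $\|\phi\|_\LL^2=\int_0^\infty e^{-t}\|f_\ep(\ep t)\|_{L^2}^2\dt\leq\ep^3\leq\ep^2$, hence $\|\phi\|_\LL\leq\ep$ as required (here $\ep<1$ is used).

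The main work is \eqref{assAq}. I would start from the scaled expression for $\int_0^t\mathcal{A}^2\|\phi(\cdot)\|_{L^2}^2(s)\ds$ and bound the kernel crudely: on the region $\sigma\leq t_\ep/\ep$ the integrand vanishes, while on $\sigma\geq t_\ep/\ep$ we have $\ep\sigma\geq t_\ep$, and for such $\sigma$ the factor $e^{-(\sigma-s)}(\sigma-s)$ is at most a constant (it is bounded by $e^{-1}$, or one may simply bound it by $\sigma$, whichever is cleaner). Splitting the double integral and using Fubini, together with (iv) which controls $\int_{t_\ep}^{T_\ep}\|f_\ep\|_{L^2}^2\leq 1/\ep$, one gets a bound of the form $\ep\int_0^t\mathcal{A}^2\|\phi(\cdot)\|_{L^2}^2(s)\ds\leq C\cdot e^{-t_\ep/\ep}(1+T_\ep/\ep)$, which is $\leq\ep^3$ by the second inequality in (iii); since $\ep^3\leq\ep^2$, this gives \eqref{assAq} with $t_\ep$ the sequence from Lemma~\ref{lem-fapp} (consistent with the hypothesis $t_\ep\downarrow0$ there). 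The only delicate point is getting the right form of the exponential tail estimate: because $f_\ep$ is supported away from $0$, every factor $e^{-t}$ (or $e^{-(\sigma-s)}$) evaluated on the support produces the gain $e^{-t_\ep/\ep}$, and one must make sure the polynomial factors $(1+T_\ep/\ep)$ coming from the length of the support and from the $(\sigma-s)$ weight are absorbed by the choice of $T_\ep$ and $t_\ep$ in (iii). I expect this tail bookkeeping — keeping track of exactly which powers of $\ep$, $t_\ep$ and $T_\ep$ appear — to be the only real obstacle; once the inequalities of (iii), (iv), (v) are invoked in the right places, the conclusion is immediate.
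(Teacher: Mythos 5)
Your verification of \eqref{suppcomt} and \eqref{eq-assogr} is correct and matches the paper: both follow from (ii), (iii) and (v) of Lemma~\ref{lem-fapp} by direct scaling. The problem is \eqref{assAq}, where your argument has a genuine gap.

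You claim that the entire quantity $\ep\int_0^t\mathcal{A}^2\|\phi(\cdot)\|_{L^2}^2(s)\,ds$ is bounded by $C\,e^{-t_\ep/\ep}(1+T_\ep/\ep)\leq\ep^3$, i.e.\ that the $\gamma$-term in \eqref{assAq} is superfluous. This cannot be right. The kernel appearing in $\mathcal{A}^2$ is $e^{-(\sigma-s)}(\sigma-s)$, which depends only on the \emph{difference} $\sigma-s$; the fact that $\phi$ is supported on $\{\sigma\geq t_\ep/\ep\}$ produces the gain $e^{-t_\ep/\ep}$ only when $s$ is much smaller than $t_\ep/\ep$, not when $s$ is itself in (or beyond) the support. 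For $s\geq t_\ep/\ep$ one has $\sigma-s$ near $0$ and the kernel is $O(1)$. A Fubini computation makes this concrete: since $\int_0^{\sigma}e^{-(\sigma-s)}(\sigma-s)\,ds\leq 1$, the sharp crude bound is
\[
\int_0^\infty\mathcal{A}^2\|\phi(\cdot)\|_{L^2}^2(s)\,ds\leq\int_0^\infty\|\phi(\sigma)\|_{L^2}^2\,d\sigma
=\ep^{-1}\int_{t_\ep}^{T_\ep}\|f(\sigma)\|_{L^2}^2\,d\sigma,
\]
so that $\ep\int_0^\infty\mathcal{A}^2\|\phi(\cdot)\|_{L^2}^2(s)\,ds$ is of size $\gamma(T_\ep)-\gamma(t_\ep)$, not $\ep^2$. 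The $\gamma$-term in \eqref{assAq} is therefore essential and your claimed $\ep^3$ bound is false.

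The correct route (the one the paper takes) is to invoke Lemma~\ref{lem-aveq} with $\tau=0$, $\delta=t$, $h=\|\phi(\cdot)\|_{L^2}^2$, which gives
\[
\int_0^t\mathcal{A}^2\|\phi(\cdot)\|_{L^2}^2(s)\,ds\leq\int_0^t\|\phi(s)\|_{L^2}^2\,ds
+\mathcal{A}\|\phi\|_{L^2}^2(t)+\mathcal{A}^2\|\phi\|_{L^2}^2(t).
\]
The first term on the right carries no exponential gain at all; after multiplying by $\ep$ and changing variables it becomes $\int_0^{\ep t}\|f_\ep(s)\|_{L^2}^2\,ds\leq\gamma(\ep t)$, which is exactly where the $\gamma$ in \eqref{assAq} comes from. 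The two tail terms $\mathcal{A}\|\phi\|_{L^2}^2(t)+\mathcal{A}^2\|\phi\|_{L^2}^2(t)=\int_0^{T^*}e^{-s}(1+s)\|\phi(s+t)\|_{L^2}^2\,ds$ are then the only place where the exponential factor is effective: splitting at $s=t_\ep/\ep$, the far part is absorbed using (iii) and (iv) exactly as you anticipated (yielding the $\ep^2$ in \eqref{assAq}), while the near part, using only $e^{-x}(1+x)\leq1$, contributes $\int_{\ep t}^{\ep t+t_\ep}\|f_\ep(s)\|_{L^2}^2\,ds$, which completes $\gamma(\ep t)$ to $\gamma(\ep t+t_\ep)$. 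In short, your bookkeeping is correct on the tail pieces, but you have overlooked the near-diagonal contribution $\int_0^t\|\phi\|_{L^2}^2\,ds$, and that is precisely the source of the $\gamma$-term you dropped.
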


\begin{proof}
 First, one can easily see that \eqref{suppcomt} and \eqref{eq-assogr} are direct consequences of properties (ii), (iii) and (v) of Lemma \ref{lem-fapp}.
 
  On the other hand, if one applies Lemma \ref{lem-aveq} with $\tau=0$, $\delta=t$ and $h(t)=\phi(t)=f_\eps(\eps t)$,
  then
\[
\int_0^t \mathcal{A}^2\|\phi(\cdot)\|_{L^2}^2\,(s)\,ds
\leq\int_0^t \|\phi(s)\|_{L^2}^2\ds
+\mathcal{A}\|\phi \|_{L^2}^2\,(t)+\mathcal{A}^2\|\phi\|_{L^2}^2\,(t).
 \]
 Now, from (ii) of Lemma \ref{lem-fapp} (with some changes of variable) we find that
 \begin{align*}
 \mathcal{A}\|\phi \|_{L^2}^2\,(t)+\mathcal{A}^2\|\phi\|_{L^2}^2\,(t)
 & = \, \int_0^{T^*}e^{-s}\left(1+s\right)\|\phi(s+t)\|_{L^2}^2\ds\\[.2cm]
 & = \, \eps^{-1} \int_0^{T_\eps}e^{-s/\eps}\left(1+\tfrac{s}{\eps}\right)\|f_\eps(s+\eps t)\|_{L^2}^2\ds.
 \end{align*}
 If we split the integral in two parts, then, from (iii) and (iv) in Lemma \ref{lem-fapp},
 \begin{align*}
\eps^{-1}  \int_{t_{\ep}}^{T_{\ep}}e^{-s/\ep}\,\left(1+\tfrac{s}{\ep}\right)\,\|f_{\ep}(s+\eps t)\|_{L^2}^2\ds & 
\leq \eps^{-1}
e^{-t_{\ep}/\ep}\,\left(1+\tfrac{T_{\ep}}{\ep}\right)\int_{t_{\ep}}^{T_{\ep}}\|f_{\ep}(s+\eps t)\|_{L^2}^2\ds
\leq \ep,
 \end{align*}
 while, recalling that $e^{-x}\,(1+x)\leq1$ for every $x\geq0$,
 \[
\eps^{-1}  \int_0^{t_{\ep}}e^{-s/\ep}\,\left(1+\tfrac{s}{\ep}\right)\,\|f_{\ep}(s+\eps t)\|_{L^2}^2\ds 
\leq \eps^{-1}\int_0^{t_{\ep}}\|f_{\ep}(s+\eps t)\|_{L^2}^2\ds=
\eps^{-1}\int_{\eps t}^{\eps t+t_{\ep}}\|f_{\ep}(s)\|_{L^2}^2\ds.
 \]
Therefore, recalling the definition of $\gamma$ given by \eqref{eq-gamma},
 \begin{equation*}
 \int_0^t \mathcal{A}^2\|\phi(\cdot)\|_{L^2}^2\,(s)\,ds
\leq\int_0^t \|\phi(s)\|_{L^2}^2\ds+
\eps^{-1}\int_{\eps t}^{\eps t+t_{\ep}}\|f_{\ep}(s)\|_{L^2}^2\ds
+\eps\leq \eps^{-1}\gamma(\eps t+t_\eps)+\eps
  \end{equation*}
and \eqref{assAq} follows.
\end{proof}

We can now prove the first part of Theorem \ref{result}.

\begin{proof}[Proof of Theorem~\ref{result}: part (a)]
 Let $(f_\ep)$ be a sequence obtained via Lemma \ref{lem-fapp}. If we set \eqref{eq-assfi} in \eqref{eq-funzjsor}, \eqref{suppcomt}--\eqref{assAq} and all the hypothesis of Proposition \ref{prop-minimi} are satisfied and hence we obtain a minimizer $u_{\ep}$, in the class of
 functions $u\in H_{\text{loc}}^2([0,\infty);L^2)$ subject to \eqref{eq-bc}, that fulfills \eqref{eq-levest}. Now, as
  \[
   F_{\ep}(w)=\ep J_{\ep}(u)\qquad\mbox{whenever}\qquad u(t,x)=w(\ep t,x),
  \]
  if $w_{\ep}$ is defined by
  \begin{equation}
  \label{eq-relmin}
  w_{\ep}(t,x)=u_{\ep}(t/\ep,x)\qquad t\geq0,\quad x\in\R^n,
 \end{equation}
  then it is the required minimizer.
\end{proof}

\begin{remark}
\label{rem-connections}
It is worth stressing that, under the assumptions of Theorem \ref{result} and \eqref{eq-assfi}, \eqref{eq-relmin} provides a direct connection between the minimizers of $J_\ep$ obtained via Proposition \ref{prop-minimi} and the minimizers of $F_\ep$.
Throughout, we will massively use this relation and, in particular, the fact that, setting \eqref{eq-assfi} with $(f_\ep)$ obtained via Lemma \ref{lem-fapp}, all the results proved in Sections \ref{min_prop}$\&$\ref{sec-est_en} are valid. We will also tacitly assume that the hypothesis of Theorem \ref{result} are satisfied.
\end{remark}

\noindent The proof of item (b) of Theorem \ref{result} requires two further auxiliary results.

\begin{lemma}[Euler-Lagrange equation of $u_{\ep}$]
 If $\eta(t,x)=\varphi(t)h(x)$, where $h\in\mathrm{W}$ and
$\varphi\in C^{1,1}([0,\infty))$ satisfies $\varphi(0)=\varphi'(0)=0$, then
 \begin{equation}
  \label{eq-elue}
  \frac{1}{\ep^2}\int_0^{\infty}e^{-t}\,\big(u_{\ep}''(t),\eta''(t)\big)_{L^2}\dt=\int_0^{\infty}e^{-t}\,\left(-\left\langle\nabla\mathcal{W}\big(u_{\ep}(t)\big),\eta(t)\right\rangle+\big(f_{\ep}(\ep t),\eta(t)\big)_{L^2}\right)\dt.
 \end{equation}
 Moreover, the same conclusion holds if $\eta\in C_0^{\infty}(\R^+\times\R^n)$.
\end{lemma}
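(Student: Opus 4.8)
The plan is to derive the Euler--Lagrange equation for $u_\ep$ by perturbing the minimizer in the ``vertical'' direction, i.e.\ by competitors of the form $u_\ep + \delta\,\eta$, and then differentiating $J_\ep$ at $\delta = 0$. For this to be a legitimate variation we need $u_\ep + \delta\eta$ to lie in the admissible class: it must belong to $H^2_{\text{loc}}([0,\infty);L^2)$ and satisfy the boundary conditions \eqref{eq-bc}. The first holds because $\eta(t,x) = \varphi(t)h(x)$ with $h \in \mathrm{W} \hookrightarrow L^2$ and $\varphi \in C^{1,1}$, so $\eta'' = \varphi'' h \in L^\infty_{\text{loc}}([0,\infty);L^2)$; the second holds precisely because $\varphi(0) = \varphi'(0) = 0$, so the perturbation does not touch the initial data. (The same reasoning applies verbatim when $\eta \in C_0^\infty(\R^+\times\R^n)$, which has compact support away from $t=0$.)

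First I would write $J_\ep(u_\ep + \delta\eta) = H_\ep(u_\ep + \delta\eta) - S(u_\ep + \delta\eta)$ and compute $\frac{d}{d\delta}\big|_{\delta=0}$ of each piece. The term $S$ is linear in its argument, so $\frac{d}{d\delta}S(u_\ep + \delta\eta)\big|_{\delta=0} = \int_0^\infty\!\int_{\R^n} e^{-t}\phi(t,x)\eta(t,x)\dx\dt$, which by \eqref{eq-assfi} equals $\int_0^\infty e^{-t}(f_\ep(\ep t),\eta(t))_{L^2}\dt$. For $H_\ep$, the quadratic term $\frac{1}{2\ep^2}\int e^{-t}\|u''\|_{L^2}^2$ contributes $\frac{1}{\ep^2}\int_0^\infty e^{-t}(u_\ep''(t),\eta''(t))_{L^2}\dt$, and the term $\int_0^\infty e^{-t}\mathcal{W}(u(t))\dt$ contributes $\int_0^\infty e^{-t}\langle\nabla\mathcal{W}(u_\ep(t)),\eta(t)\rangle\dt$ via the G\^ateaux differentiability of $\mathcal{W}$ assumed in Assumption~\ref{ass-W}. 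Setting the total derivative to zero and rearranging gives exactly \eqref{eq-elue}.

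The main obstacle is justifying the differentiation under the integral sign, i.e.\ that $\frac{d}{d\delta}H_\ep(u_\ep+\delta\eta)\big|_{\delta=0}$ really equals the integral of the pointwise (in $t$) derivatives, and in particular controlling the potential term $\int_0^\infty e^{-t}\mathcal{W}(u_\ep(t)+\delta\varphi(t)h)\dt$. Since $\mathcal{W}$ is only G\^ateaux differentiable and not assumed convex in general, one cannot simply pass to the limit naively; the standard remedy is to use \eqref{eq-diffW} to get a dominating bound. Concretely, for the difference quotient one writes $\frac{1}{\delta}[\mathcal{W}(u_\ep(t)+\delta\eta(t)) - \mathcal{W}(u_\ep(t))] = \int_0^1 \langle\nabla\mathcal{W}(u_\ep(t)+s\delta\eta(t)),\eta(t)\rangle\ds$; using \eqref{eq-diffW} together with $\mathcal{W}(u_\ep(t)+s\delta\eta(t)) \leq C(\mathcal{W}(u_\ep(t)) + \mathcal{W}(s\delta\eta(t)))$ (or a similar convexity/growth estimate available in this setting, as in \cite{ST2}) one bounds the integrand by something of the form $C(1 + \mathcal{W}_\ep(t)^\theta)\|\eta(t)\|_{\mathrm{W}}$, which is integrable against $e^{-t}$ by \eqref{eq-levest} and the fact that $\varphi$ is bounded. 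This gives a dominated-convergence argument, exactly as in \cite[proof of Lemma 4.1]{ST2}, so I would invoke that reference for the details and only highlight the one new term, the one involving $f_\ep(\ep t)$, whose treatment is elementary since $f_\ep(\ep t) \in L^2$ for each fixed $t$ and has the integrability provided by Lemma~\ref{lem-fapp}(v) and Cauchy--Schwarz. For $\eta \in C_0^\infty(\R^+\times\R^n)$, one approximates $\eta$ by finite sums of separated-variables functions $\varphi_j(t)h_j(x)$ (or simply notes that the argument above goes through verbatim since all the relevant norms of $\eta$ are finite and $\eta$ vanishes near $t=0$), and passes to the limit using the same dominating bounds.
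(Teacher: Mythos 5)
Your proposal is correct and follows essentially the same route as the paper: perturb the minimizer vertically with $u_\ep+\delta\eta$, note that $\varphi(0)=\varphi'(0)=0$ preserves the boundary data, differentiate $J_\ep = H_\ep - S$ at $\delta=0$ (with the linear term $S$ contributing the new $f_\ep$ piece), and handle $\eta\in C_0^\infty$ by density, deferring the domination/convergence technicalities for $H_\ep$ to \cite{ST2} exactly as the paper does.
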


\begin{proof}
When $\eta(t,x)=\varphi(t)h(x)$,
 \eqref{eq-elue} is obtained letting $g(\delta)=J_{\ep}(u_{\ep}+\delta\eta)$
and observing that $g'(0)=0$, since
$u_{\ep}$ is a minimizer of $J_{\ep}$ and
$u_{\ep}+\delta\eta$ is an admissible competitor.
The case where $\eta\in C_0^{\infty}(\R^+\times\R^n)$ follows by a density argument
(see \cite[proof of Lemma 5.1]{ST2} for more details).
The novelty here, with respect to
\cite{ST2},
 is just the term with $f_\eps$ in \eqref{eq-elue}, which originates from the additional
 term $S(u)$ in \eqref{defjeps}.
\end{proof}

\begin{lemma}[Representation formula for $u_{\ep}''$]
 For all $h\in\mathrm{W}$
 \begin{equation}
  \label{eq-rep}
  \frac{1}{\ep^2}\big(u_{\ep}''(\tau),h\big)_{L^2}
  =-\mathcal{A}^2\omega_1\,(\tau)+\mathcal{A}^2\omega_2\,(\tau),\qquad\mbox{for a.e.}\quad \tau>0,
 \end{equation}
 where $\omega_1(\tau)=\left\langle\nabla\mathcal{W}\big(u_{\ep}(\tau)\big),h\right\rangle$
 and $\omega_2(\tau)=\big(f_{\ep}(\ep \tau),h\big)_{L^2}$.
\end{lemma}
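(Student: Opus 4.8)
The plan is to start from the Euler--Lagrange equation \eqref{eq-elue} with a test function of the separated form $\eta(t,x)=\varphi(t)h(x)$, and extract from it a scalar identity for the real-valued function $t\mapsto\frac{1}{\ep^2}(u_\ep''(t),h)_{L^2}$, which I will denote $d(t)$. Writing $\omega_1(t)=\langle\nabla\mathcal{W}(u_\ep(t)),h\rangle$ and $\omega_2(t)=(f_\ep(\ep t),h)_{L^2}$, equation \eqref{eq-elue} reads
\[
\int_0^\infty e^{-t}\,d(t)\,\varphi''(t)\dt=\int_0^\infty e^{-t}\big(-\omega_1(t)+\omega_2(t)\big)\varphi(t)\dt
\]
for every $\varphi\in C^{1,1}([0,\infty))$ constant for large $t$ with $\varphi(0)=\varphi'(0)=0$. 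This says precisely that, in the sense of distributions on $(0,\infty)$, the function $e^{-t}d(t)$ satisfies $(e^{-t}d)''=e^{-t}(-\omega_1+\omega_2)$, i.e.\ $d''-2d'+d=-\omega_1+\omega_2$; equivalently $\big(e^{-t}(d-d')\big)'=-e^{-t}(-\omega_1+\omega_2)$ after one integration, but it is cleaner to recognize the operator $(\tfrac{d}{dt}-1)^2$ on the left.

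The key step is then to invert the operator $(\tfrac{d}{dt}-1)^2$ with the ``correct'' boundary behaviour at $+\infty$, and to observe that its inverse is exactly $\mathcal{A}^2$. Indeed, from the relations recalled in Section~\ref{sec-average}, $(\mathcal{A}g)'=\mathcal{A}g-g$, so $(\tfrac{d}{dt}-1)\mathcal{A}g=-g$ and hence $(\tfrac{d}{dt}-1)^2\mathcal{A}^2g=g$; thus $d=-\mathcal{A}^2\omega_1+\mathcal{A}^2\omega_2$ is a solution of the distributional ODE, and it is the one that decays appropriately as $t\to\infty$. To make this rigorous I would argue directly with test functions: for $\varphi$ as above, $\varphi''$ ranges over a large enough class that, using Fubini to move $\mathcal{A}^2$ onto $\varphi$ (the adjoint of $\mathcal{A}$ against the weight $e^{-t}$ is again an averaging-type operator, or more simply one expands $\mathcal{A}^2\omega_i$ by its integral definition \eqref{eq-avter} and integrates in $t$ first), one checks that $t\mapsto -\mathcal{A}^2\omega_1(t)+\mathcal{A}^2\omega_2(t)$ satisfies the same weak identity as $d$; since two locally integrable functions with the same action against all such $\varphi''$ and with controlled growth must agree a.e., \eqref{eq-rep} follows. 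One must also verify that $\mathcal{A}^2\omega_1$ and $\mathcal{A}^2\omega_2$ are well defined for a.e.\ $\tau$: for $\omega_2$ this uses \eqref{eq-assogr}/property (v) and Cauchy--Schwarz, while for $\omega_1$ one uses the growth bound \eqref{eq-diffW} together with \eqref{eq-levest} (via $\mathcal{W}^\theta\le 1+\mathcal{W}$) to get $\mathcal{A}|\omega_1|\,(0)<\infty$.

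The main obstacle I anticipate is not the formal inversion but the justification of the boundary terms: a priori the distributional equation $d''-2d'+d=-\omega_1+\omega_2$ only determines $d$ up to a solution of the homogeneous equation, i.e.\ up to $ae^{t}+bte^{t}$, and one must rule these out. This is where the special structure of the minimizer enters — exactly as in the passage from \eqref{eq-relder} to \eqref{eq-relzero}--\eqref{eq-relt}, the finiteness built into $u_\ep$ (here, $u_\ep''\in\LL$ from Proposition~\ref{prop-minimi}, forcing $e^{-t}|d(t)|$ to be integrable against suitable weights) excludes the growing homogeneous solutions, pinning down $d=-\mathcal{A}^2\omega_1+\mathcal{A}^2\omega_2$. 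Concretely I would mimic the approximation device used in the proof of the Corollary after Proposition~\ref{prop-jder}: approximate the (inadmissible) choices of $\varphi$ that would produce Dirac masses by admissible $\varphi_k\in C^{1,1}$, pass to the limit by dominated convergence using \eqref{suppcomt}, \eqref{eq-assogr} and the energy bound \eqref{stimalocT2}, and read off \eqref{eq-rep} pointwise a.e.\ in $\tau$. The term with $f_\ep$ is the only genuinely new ingredient compared to \cite{ST2}, and it is harmless since $\omega_2$ is as regular and as integrable as needed thanks to Lemma~\ref{lem-fapp}.
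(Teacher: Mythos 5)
Your proposal is correct and, in its final form (approximating the formally inadmissible choice $\varphi(t)=(t-\tau)^+$ by $C^{1,1}$ test functions in \eqref{eq-elue} and passing to the limit), it is exactly the route taken in the paper, which reads off \eqref{eq-rep} at Lebesgue points of $(u_{\ep}''(\cdot),h)_{L^2}$ just as in \cite[proof of (2.11)\&(2.16)]{ST2}. The preliminary ``distributional ODE $d''-2d'+d=-\omega_1+\omega_2$ plus inversion of $(\tfrac{d}{dt}-1)^2$ by $\mathcal{A}^2$'' framing is a nice way to explain why $\mathcal{A}^2$ must appear, and your checks that $\mathcal{A}|\omega_1|(0)<\infty$ (via \eqref{eq-diffW}, \eqref{eq-levest}) and $\mathcal{A}|\omega_2|(0)<\infty$ (via \eqref{eq-assogr}) are the right integrability hypotheses; a minor point is that against test functions with $\varphi(0)=\varphi'(0)=0$ and $\varphi$ eventually constant, the undetermined homogeneous part is effectively the one-parameter family $ce^{t}$ rather than $ae^t+bte^t$, but this does not affect the argument since you in any case eliminate it through the decay forced by $u_\ep''\in\LL$.
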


\begin{proof}
For every $h\in W$ and every $\tau>0$,
\eqref{eq-rep} formally
follows from \eqref{eq-elue} choosing $\eta(t,x)=\varphi(t)h(x)$, with $\varphi(t)=(t-\tau)^+$
so that $\varphi''(t)$ is a Dirac delta at $t=\tau$.
Indeed, \eqref{eq-rep}
can be proved rigorously (at every Lebesgue point $\tau$ of $\big(u_{\ep}''(\tau),h\big)_{L^2}$)
by approximating $\varphi(t)=(t-\tau)^+$ with $C^{1,1}$ functions, exactly
as in \cite[proof of (2.11)$\&$(2.16)]{ST2}.
\end{proof}

\begin{proof}[Proof of Theorem~\ref{result}: part (b)]
Note that, by \eqref{eq-relmin} and \eqref{eq-kinen}, $K_{\ep}(t/\ep)=\frac{1}{2}\|w_{\ep}'(t)\|_{L^2}^2$ and, since $K_{\ep}(t/\ep)\leq E_{\ep}(t/\ep)$, \eqref{stimalocT2} entails \eqref{eq-apruno}.

 On the other hand, arguing as in \cite[proof of Theorem 2.4]{ST2}, we see that
 \[
  \int_s^{s+1}\mathcal{W}_{\ep}(t)\dt\leq C\big(1+E_{\ep}(s-1)\chi_{[1,\infty)}(s)\big),\qquad\forall s\geq0,
 \]
 so that, setting $s=\tau/\ep$, (with some changes of variable)
 \[
  \int_{\tau}^{\tau+\ep}\mathcal{W}\big(w_{\ep}(t)\big)\dt\leq C\ep\big(1+E_{\ep}(\tau/\ep-1)\chi_{[\ep,\infty)}(\tau)\big)\leq C\ep\big(1+C_{\tau-\ep}\chi_{[\ep,\infty)}(\tau)\big),\qquad\forall \tau\geq0,
 \]
 where $C_{\tau-\ep}$ is the constant provided by \eqref{stimalocT2} (when $T=\tau-\ep$). As this constant is increasing with respect to time, one sees that for every $\tau\geq0$ and every $T>\ep$
 \[
  \int_{\tau}^{\tau+T}\mathcal{W}\big(w_{\ep}(t)\big)\dt\leq C\ep\sum_{i=1}^{\left[\frac{T}{\ep}\right]+1}(1+C_{\tau+(i-1)\ep})\leq C(1+C_{T+\tau+1})
 \]
 (where $\left[\tfrac{T}{\ep}\right]$ denotes the integer part of $\tfrac{T}{\ep}$), so that \eqref{eq-aprdue} is proved.

 Finally, we must prove \eqref{eq-aprtre}. By \eqref{eq-diffW}, one can see that $|\omega_1(t)|\leq C\|h\|_{\mathrm{W}}\big(1+\mathcal{W}_{\ep}(t)\big)$ and consequently
 \begin{equation}
  \label{eq-auno}
  |\mathcal{A}^2\omega_1\,(t)|\leq C\|h\|_{\mathrm{W}}\big(1+E_{\ep}(t)\big).
 \end{equation}
 On the other hand
 \[
  |\mathcal{A}^2\omega_2\,(t)|\leq\|h\|_{L^2}\int_t^{\infty}e^{-(s-t)}\,(s-t)\,\|f_{\ep}(\ep s)\|_{L^2}\ds
 \]
 and then, from \eqref{eq-dom_emb} and Jensen inequality, there results
 \begin{equation}
  \label{eq-adue}
  |\mathcal{A}^2\omega_2\,(t)|\leq C\|h\|_{\mathrm{W}}\left(\mathcal{A}^2\|f_{\ep}(\ep\,\cdot\,)\|_{L^2}^2\,(t)\right)^{1/2}.
 \end{equation}
 Combining \eqref{eq-auno} and \eqref{eq-adue} with \eqref{eq-rep}, we obtain that
 \[
  \frac{1}{\ep^2}|\big(u_{\ep}''(t),h\big)_{L^2}|\leq C\|h\|_{\mathrm{W}}\left(1+E_{\ep}(t)+\left(\mathcal{A}^2\|f_{\ep}(\ep\,\cdot\,)\|_{L^2}^2\,(t)\right)^{1/2}\right),\qquad\mbox{for a.e.}\quad t>0
 \]
 and hence, as \eqref{eq-dom_emb} entails $L^2\hookrightarrow\mathrm{W}'$, that
 \[
  \frac{1}{\ep^2}\|u_{\ep}''(t)\|_{\mathrm{W}'}\leq C\left(1+E_{\ep}(t)+\left(\mathcal{A}^2\|f_{\ep}(\ep\,\cdot\,)\|_{L^2}^2\,(t)\right)^{1/2}\right),\qquad\mbox{for a.e.}\quad t>0.
 \]
 Furthermore, in view of \eqref{eq-relmin} and \eqref{stimalocT2}, the last inequality reads
 \begin{equation}
  \label{eq-treaux}
  \|w_{\ep}''(t)\|_{\mathrm{W}'}\leq C\left(1+C_{t}+\left(\mathcal{A}^2\|f_{\ep}(\ep\,\cdot\,)\|_{L^2}^2\,(t/\ep)\right)^{1/2}\right),\qquad\mbox{for a.e.}\quad t>0.
 \end{equation}
 Now, recalling \eqref{assAq} and \eqref{eq-gamma}, in view of Corollary \ref{cor-fond}, one finds that for every $T>0$
 \[
  \int_0^T\mathcal{A}^2\|f_{\ep}(\ep\,\cdot\,)\|_{L^2}^2\,(t/\ep)\dt\leq \gamma(T+t_\ep)+\ep^2.
 \]
 Therefore, since $t_{\ep}\downarrow0$ when $\ep\downarrow0$, squaring and integrating inequality \eqref{eq-treaux} on $[0,T]$, we get \eqref{eq-aprtre}.
\end{proof}

\begin{remark}
 Due to the presence of the source term in \eqref{eq-wave}, the estimate that we establish on $(w_{\ep}'')$ is much ``weaker'' than the one obtained in \cite{ST2} in the homogeneous case. However, as we show below, this does not compromise the proof.
\end{remark}


\section{Proof of Theorem \ref{result}: parts (c), (d) and (e)}

Preliminarily, we stress that, throughout, we deal with a sequence of minimizers $w_{\ep_i}$ and we will \emph{tacitly} extract several subsequences. However, for ease of notation, we will denote by $w_{\ep}$ the original sequence, as well as the subsequences we extract. The same holds for all the other quantities depending on $\ep$.

\begin{proof}[Proof of Theorem~\ref{result}: part (c)]
 Let $T>0$. By \eqref{eq-apruno} and \eqref{eq-aprtre}, we see that
 \[
  \|w_{\ep}\|_{H^1([0,T];L^2)}\leq C_T,\qquad \|w_{\ep}'\|_{L^{\infty}([0,T];L^2)}\leq C_T,\qquad \|w_{\ep}'\|_{H^1([0,T];\mathrm{W}')}\leq C_T.
 \]
 Arguing as in \cite[proof of Theorem 2.4]{ST2}, this is sufficient to prove convergence in $H^1([0,T];L^2)$, \eqref{eq-ic} (with the latter meant as an equality in $\mathrm{W}'$) and \eqref{eq-reg}.
\end{proof}

\begin{proof}[Proof of Theorem~\ref{result}: part (d)]
 Observe that, letting $l(t):=\mathcal{W}_{\ep}(t)$ and $m(t):=E_{\ep}(t)-K_{\ep}(t)$ in \cite[Lemma 6.1]{ST2}, we obtain
 \[
  Y(\delta a)\int_{T+\delta a}^{T+a}\mathcal{W}_{\ep}(t)\dt+\int_T^{T+a}K_{\ep}(t)\dt\leq\int_T^{T+a}E_{\ep}(t)\dt,
 \]
 where $Y(z):=\int_0^ze^{-s}\,s\ds$. Replacing $a$ with $a/\ep$ and $T$ with $T/\ep$, with a change of variable, the previous inequality reads
 \[
  Y\left(\frac{\delta a}{\ep}\right)\int_{T+\delta a}^{T+a}\mathcal{W}\big(w_{\ep}(t)\big)\dt+\frac{1}{2}\int_T^{T+a}\|w_{\ep}'(t)\|_{L^2}^2(t)\dt\leq\int_T^{T+a}E_{\ep}(t/\ep)\dt.
 \]
 Hence, from \eqref{stimalocT}, we see that for an arbitrary $\beta>1$
 \[
  \begin{array}{l}
  \displaystyle Y\left(\frac{\delta a}{\ep}\right)\int_{T+\delta a}^{T+a}\mathcal{W}\big(w_{\ep}(t)\big)\dt+\frac{1}{2}\int_T^{T+a}\|w_{\ep}'(t)\|_{L^2}^2(t)\dt \\[.5cm]
  \hspace{5cm} \displaystyle \leq\int_T^{T+a}\left(\sqrt{E_\ep(0)}+\left(\sqrt{\ep C_\beta}+\sqrt{t\beta/2}\right)\sqrt{\gamma(t+t_\ep)+\ep^2}\right)\dt.
  \end{array}
 \]
 Now, when $\ep\downarrow0$, by definition $Y\left(\frac{\delta a}{\ep}\right)\to1$, whereas by \eqref{eq-appenzero} and \eqref{eq-gamma}
 \[
  \int_T^{T+a}\left(\sqrt{E_\ep(0)}+\left(\sqrt{\ep C_\beta}+\sqrt{t\beta/2}\right)\sqrt{\gamma(t+t_\ep)+\ep^2}\right)\dt\to\int_T^{T+a}\left(\sqrt{\mathcal{E}(0)}+\sqrt{t\gamma(t)\beta/2}\right)\dt.
 \]
 Consequently, arguing as in \cite[proof of Theorem 2.4]{ST2},
 \[
  \int_{T+\delta a}^{T+a}\mathcal{W}\big(w(t)\big)\dt+\frac{1}{2}\int_T^{T+a}\|w'(t)\|_{L^2}^2(t)\dt\leq\int_T^{T+a}\left(\sqrt{\mathcal{E}(0)}+\sqrt{t\gamma(t)\beta/2}\right)\dt
 \]
 and, letting $\delta\downarrow0$ and, subsequently, dividing by $a$ and letting $a\downarrow0$,  we obtain
 \[
  \mathcal{W}\big(w(T)\big)+\frac{1}{2}\|w'(T)\|_{L^2}^2\leq \sqrt{\mathcal{E}(0)}+\sqrt{T\gamma(T)\beta/2},\qquad\mbox{for a.e.}\quad T\geq0.
 \]
 Since the inequality is valid for every $\beta>1$, letting $\beta\downarrow1$, \eqref{eq-enineq} follows.
\end{proof}

Finally, before proving part (e) of Theorem \ref{result}, we claim the following result, which can be established directly by \eqref{eq-elue} (see \cite[Lemma 6.2]{ST2}).

\begin{lemma}
 Let $w_{\ep}$ be a minimizer of $F_{\ep}$. Then, for every function $\varphi\in C_0^{\infty}(\R^+\times\R^n)$, there results
 \begin{equation}
  \label{eq-elwe}
  \begin{array}{l}
   \displaystyle\int_0^{\infty}\big(w_{\ep}'(t),\ep^2\,\varphi'''(t)+2\ep\varphi''(t)+\varphi'(t)\big)_{L^2}\dt=\\[,5cm]
   \hspace{4.5cm}\displaystyle=\int_0^{\infty}\left\langle\nabla\mathcal{W}\big(w_{\ep}(t)\big),\varphi(t)\right\rangle\dt-\int_0^{\infty}\big(f_{\ep}(t),\varphi(t)\big)_{L^2}\dt.
  \end{array}
 \end{equation}
\end{lemma}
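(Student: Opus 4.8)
The plan is to deduce \eqref{eq-elwe} from the Euler--Lagrange identity \eqref{eq-elue} for $u_{\ep}$ by the change of time variable that links $J_{\ep}$ and $F_{\ep}$, exactly as the analogous statement \cite[Lemma 6.2]{ST2} is obtained in the homogeneous case; the only new ingredient is the source term, which is carried along without extra effort.

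First, given $\varphi\in C_0^{\infty}(\R^+\times\R^n)$, I would apply \eqref{eq-elue} with the test function $\eta(t,x):=e^{t}\varphi(\ep t,x)$. This is admissible, being smooth and compactly supported in $\R^+\times\R^n$. Differentiating, $\eta''(t)=e^{t}\bigl(\varphi(\ep t)+2\ep\varphi'(\ep t)+\ep^{2}\varphi''(\ep t)\bigr)$, so the weight $e^{-t}$ in \eqref{eq-elue} cancels the factor $e^{t}$ both in the quadratic term and, since $e^{-t}\eta(t)=\varphi(\ep t)$, in the two terms on the right-hand side. Thus \eqref{eq-elue} becomes the weightless identity
\[
\frac{1}{\ep^{2}}\int_{0}^{\infty}\bigl(u_{\ep}''(t),\,\varphi(\ep t)+2\ep\varphi'(\ep t)+\ep^{2}\varphi''(\ep t)\bigr)_{L^{2}}\dt=\int_{0}^{\infty}\Bigl(-\bigl\langle\nabla\mathcal{W}\big(u_{\ep}(t)\big),\varphi(\ep t)\bigr\rangle+\bigl(f_{\ep}(\ep t),\varphi(\ep t)\bigr)_{L^{2}}\Bigr)\dt.
\]

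Next I would substitute $t\mapsto t/\ep$ and invoke the scaling relation \eqref{eq-relmin}, which gives $u_{\ep}(t/\ep)=w_{\ep}(t)$ and hence $u_{\ep}''(t/\ep)=\ep^{2}w_{\ep}''(t)$. After multiplying through by $\ep$, this yields
\[
\int_{0}^{\infty}\bigl(w_{\ep}''(t),\,\varphi(t)+2\ep\varphi'(t)+\ep^{2}\varphi''(t)\bigr)_{L^{2}}\dt=\int_{0}^{\infty}\Bigl(-\bigl\langle\nabla\mathcal{W}\big(w_{\ep}(t)\big),\varphi(t)\bigr\rangle+\bigl(f_{\ep}(t),\varphi(t)\bigr)_{L^{2}}\Bigr)\dt.
\]
Finally, noting that $\frac{d}{dt}\bigl(\varphi(t)+2\ep\varphi'(t)+\ep^{2}\varphi''(t)\bigr)=\varphi'(t)+2\ep\varphi''(t)+\ep^{2}\varphi'''(t)$, a single integration by parts in $t$ --- with all boundary terms vanishing since $\varphi$ is compactly supported in $\R^{+}\times\R^{n}$ --- transfers the derivative onto $w_{\ep}'$ and flips the overall sign, producing precisely \eqref{eq-elwe}.

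There is no genuine obstacle: the argument is a bookkeeping computation combining the $J_{\ep}$--$F_{\ep}$ change of variables with one integration by parts. The only points worth a line of justification are the admissibility of $\eta(t,x)=e^{t}\varphi(\ep t,x)$ in \eqref{eq-elue} and the vanishing of the boundary terms, both immediate from $\varphi\in C_0^{\infty}(\R^{+}\times\R^{n})$; and the passage from product-type test functions to general $\varphi\in C_0^{\infty}(\R^{+}\times\R^{n})$, which is already built into \eqref{eq-elue}.
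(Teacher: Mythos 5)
Your proof is correct and takes essentially the route indicated in the paper: the paper derives \eqref{eq-elwe} by direct appeal to \eqref{eq-elue} (referring to \cite[Lemma 6.2]{ST2} for the analogous homogeneous computation), and your argument simply spells out that derivation. The test function $\eta(t,x)=e^{t}\varphi(\ep t,x)$ is indeed an admissible member of $C_0^{\infty}(\R^+\times\R^n)$, the cancellation of the exponential weight, the $t\mapsto t/\ep$ rescaling via \eqref{eq-relmin}, and the final integration by parts (whose boundary terms vanish because $\varphi$ is supported away from $t=0$ and from $t=\infty$) all check out.
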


\begin{proof}[Proof of Theorem~\ref{result}: part (e)]
 The goal, now, is to prove that, as $\ep\downarrow0$, equation \eqref{eq-elwe} ``tends'' to \eqref{eq-waveweak}. Let $\varphi\in C_0^{\infty}(\R^+\times\R^n)$ and $w$ be the function obtained at point (c). We immediately see that
 \[
  \int_0^{\infty}\big(w_{\ep}'(t),\ep^2\,\varphi'''(t)+2\ep\varphi''(t)+\varphi'(t)\big)_{L^2}\dt\to\int_0^{\infty}\big(w'(t),\varphi'(t)\big)_{L^2}\dt.
 \]
 and, by construction, that
 \[
  \int_0^{\infty}\big(f_{\ep}(t),\varphi(t)\big)_{L^2}\dt\to\int_0^{\infty}\big(f(t),\varphi(t)\big)_{L^2}\dt.
 \]
 Hence the core of the proof is to show that
 \[
  \int_0^{\infty}\left\langle\nabla\mathcal{W}\big(w_{\ep}(t)\big),\varphi(t)\right\rangle\dt\to\int_0^{\infty}\left\langle\nabla\mathcal{W}\big(w(t)\big),\varphi(t)\right\rangle\dt.
 \]
 However, this follows by exploiting \eqref{eq-apruno}, \eqref{eq-aprdue}, \eqref{eq-Wass} and \cite[Theorem 5.1]{lions2} as in \cite{ST2}.
\end{proof}


\section{Examples}
\label{sec-examples}

For the sake of completeness we show some examples of second order nonhomogeneous hyperbolic equations that satisfy the assumptions of Theorem \ref{result}.

\bigskip
\noindent\textbf{1. Linear equations.} A \emph{linear} hyperbolic equation (with constant coefficients and without dissipative terms) can be written as
\begin{equation}
 \label{eq-linear}
 w''=-\sum_{j\in\mathcal{R}}(-1)^{|j|}\,\partial^{2j}w+f,
\end{equation}
where $\mathcal{R}\subset\mathbb{N}^n$ is a finite set of multi-indices and $\partial^{2j}$ denotes partial differentiation in space with respect to the multi-index $2j$. If we set
\[
 \mathcal{W}(v)=\frac{1}{2}\sum_{j\in\mathcal{R}}\int_{\R^n}|\partial^jv|^2\dx,
\]
then \eqref{eq-wave} reads like \eqref{eq-linear} and, letting $\mathrm{W}=\{v\in L^2:\partial^jv\in L^2,\,\forall j\in\mathcal{R}\}$ and $\theta=1/2$, assumptions of Theorem \ref{result} are satisfied (in view of Remark \ref{rem-extensions}). In particular, for suitable choices of $\mathcal{R}, \eqref{eq-linear}$ reads:
\[
 w''=\Delta w+f,\qquad w''=\Delta w-w+f,\qquad\text{or}\qquad w''=-\Delta^2 w+f
 \]
 and than all of these equations (namely, \emph{D'Alembert}, \emph{Klein-Gordon} and \emph{Plate/Bi-harmonic} wave equations, respectively) admit a variational solution.

\bigskip
\noindent\textbf{2. Defocusing NLW equation.} The \emph{defocusing NLW} equation reads
\begin{equation}
\label{dNLW}
 w''=\Delta w-|w|^{p-2}w+f\qquad(p>1).
\end{equation}
Here the proper choice of $\mathcal{W}$ is
\[
 \mathcal{W}(v)=\int_{\R^n}\left(\frac{1}{2}|\nabla v|^2+\frac{1}{p}|v|^p\right)\dx,
\]
with $\mathrm{W}=H^1\cap L^p$ and $\theta=1-1/\max\{2,p\}$, so that the assumptions of Theorem \ref{result} are satisfied.

\bigskip
\noindent\textbf{3. Sine-Gordon equation.} For the \emph{Sine-Gordon} equation
\[
 w''=\Delta w-\sin w+f
\]
the suitable definition of $\mathcal{W}$ is
\[
 \mathcal{W}(v)=\int_{\R^n}\left(\frac{1}{2}|\nabla v|^2+1-\cos v\right)\dx
\]
and, letting $\mathrm{W}=H^1$ and $\theta=1/2$, Theorem \ref{result} applies (again in view of Remark \ref{rem-extensions}).

\bigskip
\noindent\textbf{4. Quasilinear wave equations.} Two famous examples of \emph{quasilinear} hyperbolic equations are
\begin{equation}
 \label{eq-quasilinear}
 w''=\Delta_p w+f\qquad\mbox{and}\qquad w''=\Delta_p w-|w|^{q-2}w+f\qquad(p,q>1,\,p\neq2).
\end{equation}
For these equations, good choices of $\mathcal{W}$ are provided by
\[
 \mathcal{W}(v)=\frac{1}{p}\int_{\R^n}|\nabla v|^p\dx\qquad\mbox{and}\qquad\mathcal{W}(v)=\int_{\R^n}\left(\frac{1}{p}|\nabla v|^p+\frac{1}{q}|v|^q\right)\dx
\]
(respectively). Here, letting $\mathrm{W}=\{v\in L^2:\nabla v\in L^p\}$ with $\theta=1-1/p$ in the former case and $\mathrm{W}=\{v\in L^2:\nabla v\in L^p,\,v\in L^q\}$ with $\theta=1-1/\max\{p,q\}$ in the latter case, Theorem \ref{result} holds up to item (e). It is an open problem then to establish the existence of a variational solution for both \eqref{eq-quasilinear}$_1$-\eqref{eq-ic} and \eqref{eq-quasilinear}$_2$-\eqref{eq-ic}.

\bigskip
\noindent\textbf{5. Higher order nonlinear equations.} A famous example of higher order hyperbolic equation is the \emph{nonlinear vibrating-beam} equation
\[
 w''=-\Delta^2w+\Delta_pw-|w|^{q-2}\,w+f\qquad(p,\,q>1)
\]
(see e.g. \cite{pausader,peletier}). The suitable choice of $\mathcal{W}$ here is
\[
 \mathcal{W}(v)=\int_{\R^n}\left(\frac{1}{2}|\Delta v|^2+\frac{1}{p}|\nabla v|^p+\frac{1}{q}|v|^q\right)\dx
\]
and, setting $\mathrm{W}=\{v\in H^2:\nabla v\in L^p,v\in L^q\}$ with $\theta=1-1/\max\{2,p,q\}$, Theorem \ref{result} holds.

\bigskip
\noindent\textbf{6. Kirchhoff equations.} These are typical examples of \emph{nonlocal} problems. For instance, consider the equation
\begin{equation}\label{kirch}
 w''=\left(\int_{\R^n}|\nabla w|^2\dx\right)\Delta w+f.
\end{equation}
The natural choice of $\mathcal{W}$ is given by
\[
 \mathcal{W}(v)=\frac{1}{4}\left(\int_{\R^n}|\nabla v|^2\dx\right)^2,
\]
and, setting $\mathrm{W}=H^1$ and $\theta=3/4$, Theorem \ref{result} applies except for part (e), which consequently remains an open problem.

\bigskip
\noindent\textbf{7. Wave equations with fractional Laplacian.} Further examples of nonlocal problems are provided by hyperbolic equations involving the \emph{fractional Laplacian}, as for instance
\[
 w''=-(-\Delta)^s-\lambda|w|^{p-2}\,w+f\qquad(0<s<1,\,\lambda\geq0,\,p>1).
\]
Here, if one takes the functional
\[
 \mathcal{W}(v)=c_{n,s}\int_{\R^n\times\R^n}\frac{|v(x)-v(y)|^2}{|x-y|^{n+2s}}\dx\,dy+\frac{\lambda}{p}\int_{\R^n}|v|^p\dx,
\]
which is the natural energy associated to the fractional Laplacian, then setting $\mathrm{W}=H^s\cap L^p$ and $\theta=1-1/\max\{2,p\}$  (when $\lambda>0$, or $\mathrm{W}=H^s$ and $\theta=1/2$  when $\lambda=0$) one sees that the assumptions of Theorem \ref{result} are satisfied.


\end{document}